\newcommand{\Wh}[1]{\ensuremath{\mathrm{Wh}\left(#1\right)}}
\newcommand{\union}{\cup}
\newcommand{\intersection}{\cap}
\newcommand{\boundary}{\partial_\infty}
\newcommand{\cH}{\check{\mathrm{H}}}
\newcommand{\cC}{\check{\mathrm{C}}}
\newcommand{\hull}{\mathrm{hull}}
\newtheorem{lem}{Lemma}
\newtheorem{cor}{Corollary}
\newtheorem{thm}{Theorem}
\newtheorem{rem}{Remark}
\begin{document}
\title{Computing the \v{C}ech cohomology of decomposition spaces}
\author{Benjamin Barrett}
\maketitle
\bibliographystyle{plain}

\begin{abstract} A line pattern in a free group $F$ is defined by a malnormal
collection of cyclic subgroups. Otal defined a decomposition space
$\mathcal{D}$ associated to a line pattern. We provide an algorithm that
computes a presentation for the \v{C}ech cohomology of $\mathcal{D}$, thought
of as a $F$-module. This answers a relative version of a question of Epstein
about boundaries of hyperbolic groups. \end{abstract}

\section{Introduction}

Epstein asked whether there is an algorithm that computes the Cech cohomology
of the Gromov boundary of a hyperbolic group $\Gamma$, thought of as a
$\Gamma$-module.  Our purpose is to answer a relative version of this question
in the case of Otal's decomposition space, a special case of the Bowditch
boundary of a relatively hyperbolic group.

Fix a free group $F$ of rank $n$ with free basis $\mathcal{B}$. Let
$\mathcal{T}$ be the corresponding Cayley graph containing a vertex $e$
corresponding to the identity in $F$. Fix a finite set $\left\{w_i
\right\}$ of words in $F$. Then the line pattern $\mathcal{L}$ associated
to this set is defined to be the set of lines $\left\{gw_i^k\right\}_k$, $g
\in F$. Let $\mathcal{D}$ be the associated decomposition space: this is the
quotient of $\boundary F$ by the equivalence relation that identifies the two
end points of each line in $\mathcal{L}$. These objects are defined in
\cite{Otal}. Let $q \colon \boundary F \to \mathcal{D}$ be the quotient
projection. Equivalently, $\mathcal{D}$ is the Bowditch boundary of the
relatively hyperbolic group $(F, \mathcal{P})$ where $\mathcal{P}$ is a
peripheral family of cyclic subgroups \cite{Manning}. We present an algorithm
that computes the \v{C}ech cohomology of $\mathcal{D}$.

$F$ acts on $\boundary F$ and this action descends to an action on
$\mathcal{D}$. This gives the cohomology groups of $\mathcal{D}$ the
structure of a right $F$-module. As a corollary to our main result, we shall
see that the \v{C}ech cohomology of $\mathcal{D}$ is finitely presented as an
$F$-module. 

The \v{C}ech cohomology of $\mathcal{D}$ is defined to be the direct limit
over open covers $\mathcal{U}$ that provide successively better combinatorial
approximations to $\mathcal{D}$ of the singular cohomology of the nerve of
$\mathcal{U}$. In Section~\ref{sec:whitehead} we shall see how to associate
an open cover of $\mathcal{D}$ to a finite subtree of $\mathcal{T}$. Then
refining to a finer open cover of $\mathcal{D}$ corresponds to taking a
larger subtree of $\mathcal{T}$. We shall see that the combinatorial
properties of this open cover can be read from the Whitehead graphs associated
to the subtrees and that any open cover of $\mathcal{D}$ can be refined to an
open cover of this form. These open covers have no triple intersections, so we
immediately see that the \v{C}ech cohomology is concentrated in the 0th and 1st
dimensions.

In Sections~\ref{sec:0th} and~\ref{sec:1st} we show how to compute the 0th and
1st \v{C}ech cohomology respectively. Our methods rely on showing that some
(large) finite subtree of $\mathcal{T}$ contains sufficient information to
compute the \v{C}ech cohomology. This approach is based on the proof of
\cite[Lemma 4.12]{CM}. As corollaries we show that there are algorithms that
detect the connectedness of $\mathcal{D}$ and the triviality of
$\cH{}^1\left(\mathcal{D}, \mathbb{Z} \right)$.  The former of these
corollaries is proved by a different argument in \cite{CM}.

\section{Whitehead graphs and open covers}\label{sec:whitehead}

For $\mathcal{X}$ a subtree of $\mathcal{T}$, let \Wh{\mathcal{X}} be the
Whitehead graph of $\mathcal{L}$ at $\mathcal{X}$ as defined in \cite{CM};
briefly, it is a graph with a vertex corresponding to each vertex of
$\mathcal{T}$ adjacent to $\mathcal{X}$ and an edge connecting a pair of
vertices for each line in $\mathcal{L}$ between that pair. For more
information about Whitehead graphs and their applications, see~\cite{Manning2}.

For $v \in T$ let $S^e\left(v\right) \subset \boundary \mathcal{T}$ be the
shadow of $v$ from $e$ as defined in~\cite{CM}: the set of boundary points
$\xi$ such that the geodesic $\left[e, \xi\right]$ contains $v$. These
sets are open and closed and the collection of such sets as $v$ varies in
$\mathcal{T}$ is a basis for the topology on $\boundary\mathcal{T}$.

\begin{lem}\label{lem:construction} Let $\mathcal{X}$ be a finite subtree of
$\mathcal{T}$ containing $e$.  Then there is a covering of $\mathcal{D}$
by a collection of open sets $U_i$ in bijection with the vertices $a_i$ of
$\Wh{\mathcal{X}}$ such that:
\begin{itemize}
\item $q\left(S^e\left(a_i\right)\right) \subset U_i$, 
\item $U_i \intersection U_j \neq \emptyset$ iff there is an edge connecting
  $a_i$ and $a_j$ in $\Wh{\mathcal{X}}$, and
\item there are no triple intersections.
\end{itemize}
\end{lem}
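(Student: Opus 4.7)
The vertices $a_i$ of $\Wh{\mathcal{X}}$ are the vertices of $\mathcal{T}$ adjacent to $\mathcal{X}$ but lying outside it, and the shadows $\{S^e(a_i)\}$ form a clopen partition of $\boundary F$. Every line $\ell \in \mathcal{L}$ is of one of two types: either $\ell$ meets $\mathcal{X}$, in which case its two endpoints lie in distinct shadows $S^e(a_i), S^e(a_j)$ and $\ell$ records the corresponding edge of $\Wh{\mathcal{X}}$; or $\ell$ avoids $\mathcal{X}$, in which case $\ell$ is contained in a single subtree beyond $\mathcal{X}$ and both of its endpoints lie in the same shadow $S^e(a_k)$. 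Consequently the compact sets $q(S^e(a_i))$ already exhibit the desired combinatorial intersection pattern; the issue is only that they are not open in $\mathcal{D}$.

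To open them up without disturbing the pattern, for each edge $\epsilon$ of $\Wh{\mathcal{X}}$ joining $a_i$ and $a_j$ let $\ell_\epsilon$ be the associated line and $\xi^i_\epsilon \in S^e(a_i), \xi^j_\epsilon \in S^e(a_j)$ its endpoints; pick vertices $v^i_\epsilon$ on the ray $[a_i, \xi^i_\epsilon)$ of $\ell_\epsilon$, and $v^j_\epsilon$ on $[a_j, \xi^j_\epsilon)$. I would then set
\[
q^{-1}(U_i) := S^e(a_i) \union \bigcup_{\epsilon \text{ at } a_i} S^e\bigl(v^{j(\epsilon)}_\epsilon\bigr), \qquad U_i := q\bigl(q^{-1}(U_i)\bigr),
\]
where $j(\epsilon)$ denotes the endpoint of $\epsilon$ other than $a_i$. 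The $v^k_\epsilon$ are to be taken far enough along $\ell_\epsilon$ that: (a) the shadows $S^e(v^k_\epsilon)$ are pairwise disjoint as $(\epsilon,k)$ varies; and (b) no line $\ell' \in \mathcal{L} \setminus \{\ell_\epsilon\}$ contains the segment $[a_k, v^k_\epsilon]$.

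Granted (a) and (b), $q^{-1}(U_i)$ is open by construction, and a case check shows it is $\sim$-saturated: a line with an endpoint in $S^e(a_i)$ is either entirely inside $S^e(a_i)$ or corresponds to some edge $\epsilon$ at $a_i$, whose other endpoint $\xi^{j(\epsilon)}_\epsilon$ lies in $S^e(v^{j(\epsilon)}_\epsilon)$; and by (b) a line with an endpoint in $S^e(v^{j(\epsilon)}_\epsilon)$ is either $\ell_\epsilon$ (other endpoint in $S^e(a_i)$) or lies wholly inside $S^e(v^{j(\epsilon)}_\epsilon)$. The intersection properties then follow by tracking, for each $\xi \in \boundary F$, the unique shadow $S^e(a_m)$ containing $\xi$: property (a) forces $\xi$ into at most two of the $q^{-1}(U_i)$'s, namely $q^{-1}(U_m)$ and at most one further set determined by the small shadow $S^e(v^m_\epsilon)$ to which $\xi$ belongs. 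A short case analysis on the two kinds of equivalence classes $\{\xi\}$ and $\{\xi,\eta\}$ then matches each pairwise intersection $U_i \intersection U_j$ to an edge of $\Wh{\mathcal{X}}$ and rules out triple intersections.

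The main obstacle is to secure property (b), which amounts to a uniform overlap bound for lines of $\mathcal{L}$: two distinct axes of conjugates of the peripheral cyclic subgroups can share only a bounded tree-segment, since a longer common segment would force their infinite-cyclic stabilisers to share a nontrivial element, and hence, by malnormality, to coincide, contradicting $\ell \neq \ell'$. Once such a bound is in hand, taking $d(a_k, v^k_\epsilon)$ to exceed it gives (b) directly.
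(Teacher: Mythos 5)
Your overall strategy---fattening each shadow $S^e\left(a_i\right)$ by small shadows around the far endpoints of the lines that exit $\mathcal{X}$ at $a_i$---is the same as the paper's, but you stop the construction after one step, and the claim you use to justify stopping is false. Condition (b) only forbids a line $\ell' \neq \ell_\epsilon$ from containing the \emph{whole} segment $\left[a_j, v^j_\epsilon\right]$. However, a line $\ell'$ has exactly one endpoint in $S^e\left(v^j_\epsilon\right)$ as soon as it passes through $v^j_\epsilon$ and through the single edge of $\left[a_j, v^j_\epsilon\right]$ adjacent to $v^j_\epsilon$; it may then branch off the segment before reaching $a_j$, so (b) says nothing about it. For such a line one endpoint lies in $S^e\left(v^j_\epsilon\right) \subset q^{-1}\left(U_i\right)$ while the other lies in $S^e\left(a_j\right) \setminus S^e\left(v^j_\epsilon\right)$, which in general meets none of the shadows you attached to $U_i$. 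Hence the set you call $q^{-1}\left(U_i\right)$ is not saturated, and its image under $q$ is not open in $\mathcal{D}$: the true preimage of that image is your open set together with a countable set of stray partner points, which has no interior at those points in the Cantor set $\boundary F$. So the $U_i$ you produce do not form an open cover, which is the whole content of the lemma.

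The repair is exactly the paper's iteration: after forming $V_i^1 = S^e\left(a_i\right) \union \bigcup_\epsilon S^e\left(v^{j\left(\epsilon\right)}_\epsilon\right)$, finitely many new lines have exactly one end in $V_i^1$ (finitely many because $V_i^1$ is a finite union of shadows); add small disjoint shadows around their other endpoints to form $V_i^2$, and continue, at each stage keeping the newly added shadows disjoint from everything previously added to any $V_j$ so that no new intersections arise beyond those forced by edges of $\Wh{\mathcal{X}}$. The union $\bigcup_k V_i^k$ is then open and saturated, and only then does your analysis of pairwise and triple intersections go through. Your points (a) and (b), and the malnormality/bounded-overlap argument used to secure them, are correct and reusable at every stage of the iteration, but they do not remove the need for it.
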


\begin{proof}
We aim to construct open sets $V_i$ covering $\boundary\mathcal{T}$ such
that
\begin{itemize}
\item $V_i$ contains $S^e\left(a_i\right)$,
\item $V_i \intersection V_j = \emptyset$ iff there is an edge connecting
  $a_i$ and $a_j$ in $\Wh{\mathcal{X}}$, 
\item there are no triple intersections, and
\item for each line $l$ in the line pattern, each $V_i$ contains either
  both of $l^{\pm\infty}$ or neither.
\end{itemize}
Then the projection of these sets in $\mathcal{D}$ satisfies the requirements
of the lemma.

We build these inductively. For the first step, take $V_i^0 =
S^e\left(a_i\right)$. Then, for each $i$, there are finitely many lines in
the line pattern passing through $a_i$. Add to $V_i^0$ an open
neighbourhood of the end point not in $V_i^0$ of each such line to obtain
$V_i^1$. We do this in a way as to ensure that $V_i^1$ is the union of
$V_i^0$ and finitely many other shadows, that the open sets added are all
disjoint, and that no line in the line pattern has an end in two different
added sets. This is possible since if a subset of $\boundary \mathcal{T}$ is
a union of finitely many shadows then only finitely many lines in the line
pattern have exactly one end in that subset.

After each $V_i^1$ is defined, continue inductively, ensuring that each
$V_i^k$ is the union of finitely many shadows, so that if a line in the line
pattern has one end in $V_i^k$ then its other end is in $V_i^{k+1}$. We can
do this without introducing any new intersections, so all intersections
correspond to lines from $S^e\left(a_i\right)$ to $S^e\left(a_j\right)$
for some $i$ and $j$, so all intersections correspond to edges in the
Whitehead graph and there are no triple intersections.

Then let $U_i = q\left(\union_k V_i^k\right)$; these sets cover $D$ and
have the required properties. \end{proof}

For $\mathcal{X}$ a finite subtree of $\mathcal{T}$, we shall denote by
$\mathcal{U}_\mathcal{X}$ a cover of $\mathcal{D}$ associated to
$\mathcal{X}$. Then if $\mathcal{X} \subset \mathcal{X}'$,
$\mathcal{U}_{\mathcal{X}'}$ can be chosen to be a refinement of
$\mathcal{U}_\mathcal{X}$. Note that refinement between different open covers
associated to $\mathcal{X}$ as in Lemma~\ref{lem:construction} induces a
natural isomorphism between the singular cochain complexes of the nerves of
those covers.

It will be convenient to define an open cover associated to the empty subtree
of $\mathcal{X}$: this is the trivial covering $\mathcal{U}_\emptyset =
\left\{\mathcal{D}\right\}$. 

\begin{lem}\label{lem:refining} Let $\mathcal{W}$ be a finite open cover of
$\mathcal{D}$.  Then some refinement of $\mathcal{W}$ is of the form given
in Lemma~\ref{lem:construction}.
\end{lem}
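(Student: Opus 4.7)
The plan is to pull $\mathcal{W}$ back to $\boundary\mathcal{T}$, use compactness to find a finite subtree $\mathcal{X} \subset \mathcal{T}$ containing $e$ that is ``fine enough'' for $\mathcal{W}$, and then rerun the construction of Lemma~\ref{lem:construction} for $\mathcal{X}$ under the extra constraint that each open set it produces stays inside the preimage of some element of $\mathcal{W}$.

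For the first step, I would form $\mathcal{W}' = \{q^{-1}(W) : W \in \mathcal{W}\}$, a finite open cover of $\boundary\mathcal{T}$ whose members are each \emph{saturated} for the equivalence relation that identifies the endpoints of every line in $\mathcal{L}$. Since $\boundary\mathcal{T}$ is compact and the shadows $S^e(v)$ form a basis, I can choose finitely many vertices $v_1, \ldots, v_n$ so that the shadows $S^e(v_j)$ cover $\boundary\mathcal{T}$ and each lies inside some element of $\mathcal{W}'$. Let $\mathcal{X}$ be any finite subtree of $\mathcal{T}$ containing $e$ and all the $v_j$. A standard tree argument then shows that for every vertex $a_i$ adjacent to $\mathcal{X}$ some $v_j$ lies on the geodesic $[e, a_i]$, so $S^e(a_i) \subset S^e(v_j)$; I fix for each such $a_i$ some $W_{a_i} \in \mathcal{W}$ with $S^e(a_i) \subset q^{-1}(W_{a_i})$.

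For the second step, I would redo the inductive construction of Lemma~\ref{lem:construction} for $\mathcal{X}$, adding the requirement $V_i^k \subset q^{-1}(W_{a_i})$ at every stage. This is possible because $q^{-1}(W_{a_i})$ is open and saturated: whenever the construction calls for an open neighbourhood of the other endpoint of a line through $V_i^k$, that endpoint already lies in $q^{-1}(W_{a_i})$ by saturation, and openness allows a sufficiently small shadow neighbourhood of it to fit inside $q^{-1}(W_{a_i})$. Shrinking the added shadows never destroys a required nonempty intersection $V_i \cap V_j$, because each such intersection already contains both endpoints of the corresponding line in $\mathcal{L}$. Thus $V_i = \bigcup_k V_i^k \subset q^{-1}(W_{a_i})$, giving $U_i = q(V_i) \subset W_{a_i}$ and making $\mathcal{U}_{\mathcal{X}}$ the required refinement.

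The main obstacle is the first step; once an $\mathcal{X}$ whose adjacent shadows are individually refined by $\mathcal{W}'$ is in hand, the saturation of the sets $q^{-1}(W)$ reduces the second step to a cosmetic modification of the construction already carried out in Lemma~\ref{lem:construction}.
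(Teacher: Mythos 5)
Your proposal is correct and follows essentially the same route as the paper: pull $\mathcal{W}$ back to $\boundary\mathcal{T}$, use compactness and the shadow basis to find a subtree $\mathcal{X}$ whose adjacent shadows each sit inside a saturated preimage $q^{-1}(W)$, and then arrange that the cover of Lemma~\ref{lem:construction} refines $\mathcal{W}$. The only (immaterial) difference is that you shrink the sets $V_i$ during the inductive construction so that $V_i \subset q^{-1}(W_{a_i})$, whereas the paper builds $\mathcal{U}_\mathcal{X}$ first and then intersects each $U_i$ with $q\left(V_{\sigma(i)}\right)$ afterwards; both verifications rest on the same saturation observation.
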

\begin{proof}
Let $\mathcal{V}$ be the pullback of $\mathcal{W}$ to
$\boundary\mathcal{T}$. Consider the set
\begin{equation}
C = \left\{a \in \mathcal{T} \vert S^e\left(a\right)
  \subset V \text{ for some } V \in \mathcal{V}\right\}\textup{.}
\end{equation}

The collection $\left\{S^e\left(x\right) \vert x \in \mathcal{T}\right\}$ is
a basis for the topology on $\boundary\mathcal{T}$ so sets of the form
$S^e\left(a\right)$, $a\in C$, cover each $V \in \mathcal{V}$. Hence such
sets cover $\boundary\mathcal{T}$.

$\boundary\mathcal{T}$ is compact, so there is a finite set of points $a_1,
\dots , a_n$ such that $\left\{S^e\left(a_i\right) \right\}$ covers
$\boundary\mathcal{T}$ and each $S^e \left(a_i\right)$ is contained in some
$V_{\sigma\left(i\right)} \in \mathcal{V}$. Let $\mathcal{H}$ be the convex
hull of $\left\{a_i\right\} \union \left\{e\right\}$. Call vertices of
$\mathcal{H}$ adjacent to vertices in $\mathcal{T} - \mathcal{H}$ boundary
vertices. If we take $\left\{a_i\right\}$ to be minimal with its covering
property then the set of boundary points of $\mathcal{H}$ is precisely
$\left\{a_i\right\}$.  Let $\mathcal{X}$ be the subtree of $\mathcal{H}$
obtained by pruning off its boundary vertices.

Let $\mathcal{U}_\mathcal{X} = \left\{U_i\right\}$ be the finite cover of
$\mathcal{D}$ corresponding to $\mathcal{X}$ as in
Lemma~\ref{lem:construction}.  Define a new set $\mathcal{U}'$ of open
subsets of $\mathcal{D}$ by 
\begin{equation}
\mathcal{U}' = \left\{U_i \cap q\left(V_{\sigma\left(i\right)}\right)\right\}
  \text{.}
\end{equation}

$\mathcal{U'}$ covers $\mathcal{D}$ since it covers
$q\left(S^e\left(a_i\right)\right)$ for each $i$. It
is certainly a refinement of $\mathcal{W}$ and it is easy to check that it
corresponds to $\mathcal{X}$ in the sense of the statement of
Lemma~\ref{lem:construction}.
\end{proof}

The results of this section together imply the following corollary:

\begin{cor}
\begin{equation}
\cH{}^n\left(\mathcal{D}, \mathbb{Z}\right) = \varinjlim_\mathcal{X}\cH{}^n
  \left(\mathcal{U}_\mathcal{X}, \mathbb{Z}\right)
\end{equation}
with subtrees $\mathcal{X}$ ordered by inclusion. \qed \end{cor}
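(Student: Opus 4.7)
My plan is to identify the directed system on the right-hand side with a cofinal subsystem of the directed system defining the \v{C}ech cohomology, and then invoke cofinality of direct limits.

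First I would observe that $\mathcal{D}$ is compact Hausdorff, being a Hausdorff quotient of the compact space $\boundary F$. Consequently its \v{C}ech cohomology, which is by definition the direct limit of $\cH{}^n\left(\mathcal{U}, \mathbb{Z}\right)$ over all open covers $\mathcal{U}$, coincides with the direct limit taken over finite open covers of $\mathcal{D}$ ordered by refinement.

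Next I would check that the right-hand side is a well-defined directed system. The remarks following Lemma~\ref{lem:construction} supply the two ingredients needed: if $\mathcal{X} \subset \mathcal{X}'$ then $\mathcal{U}_{\mathcal{X}'}$ may be chosen to refine $\mathcal{U}_\mathcal{X}$, furnishing the structure maps; and any two covers associated to the same $\mathcal{X}$ are related by a natural isomorphism of nerve cochain complexes, so the group $\cH{}^n\left(\mathcal{U}_\mathcal{X}, \mathbb{Z}\right)$ and the induced transition maps depend only on $\mathcal{X}$. The resulting directed system, indexed by finite subtrees of $\mathcal{T}$ containing $e$ (together with the basepoint $\mathcal{X} = \emptyset$) under inclusion, is therefore well-defined and maps canonically into $\cH{}^n\left(\mathcal{D}, \mathbb{Z}\right)$.

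To finish I would invoke Lemma~\ref{lem:refining}: every finite open cover $\mathcal{W}$ of $\mathcal{D}$ admits a refinement of the form $\mathcal{U}_\mathcal{X}$, so the subsystem $\left\{\mathcal{U}_\mathcal{X}\right\}$ is cofinal in the system of finite open covers. A standard cofinality argument for direct limits then forces the natural map $\varinjlim_\mathcal{X} \cH{}^n\left(\mathcal{U}_\mathcal{X}, \mathbb{Z}\right) \to \cH{}^n\left(\mathcal{D}, \mathbb{Z}\right)$ to be an isomorphism. The only delicate point is the bookkeeping needed to make the assignment $\mathcal{X} \mapsto \mathcal{U}_\mathcal{X}$ into a functor on inclusions, but this is dispatched by the naturality observation above, which makes the resulting limit independent of any specific coherent choice of representatives.
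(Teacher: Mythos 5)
Your argument is correct and is exactly the one the paper intends: the corollary is stated with no written proof precisely because it follows from compactness of $\mathcal{D}$, the refinement remark after Lemma~\ref{lem:construction}, and the cofinality supplied by Lemma~\ref{lem:refining}, which are the three ingredients you assemble.
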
 

Hence the \v{C}ech cohomology of the decomposition space is determined by the
finite Whitehead graphs.

\section{Computing \texorpdfstring{$\cH{}^0\left(\mathcal{D}, \mathbb{Z}
\right)$}{H0(D,Z)}} \label{sec:0th}

For each element $\left[\sigma\right] \in \cH{}^0\left(\mathcal{D},
\mathbb{Z}\right)$ there exists a subtree $\mathcal{X} \subset \mathcal{T}$
such that $\left[\sigma\right]$ is represented by some
\begin{equation}
\sigma \in \cH{}^0\left(\mathcal{U}_\mathcal{X}, \mathbb{Z}\right) = \ker\left(
d^0 \colon \cC{}^0\left(\mathcal{U}_\mathcal{X}, \mathbb{Z}\right) \to
\cC{}^1\left(\mathcal{U}_\mathcal{X}, \mathbb{Z}\right)\right);
\end{equation}
such an element is an assignment of an integer to each connected component of
\Wh{\mathcal{X}}. In this situation we shall say that $\left[\sigma\right]$
is supported on $\mathcal{X}$ and we shall refer to the minimal such subtree
as the support of $\left[\sigma\right]$. A unique minimal such subtree exists
by the following lemma:

\begin{lem}\label{lem:intersections} Suppose that $\left[\sigma\right] \in
\cH{}^0\left(\mathcal{D}, \mathbb{Z}\right)$ is supported on $\mathcal{X}_1$
and on $\mathcal{X}_2$. Then it is also supported on $\mathcal{X}_1
\intersection \mathcal{X}_2$.\end{lem}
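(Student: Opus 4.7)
The plan is to produce a representative of $[\sigma]$ supported on $\mathcal{X}_0 := \mathcal{X}_1 \intersection \mathcal{X}_2$. Since both $\mathcal{X}_i$ contain $e$, so does $\mathcal{X}_0$, and $\mathcal{X}_0$ is a finite subtree (intersections of two subtrees sharing a vertex in a tree are convex); likewise $\mathcal{X}_3 := \mathcal{X}_1 \union \mathcal{X}_2$ is a finite subtree through $e$. Take cocycles $\sigma_i$ on $\Wh{\mathcal{X}_i}$ representing $[\sigma]$ for $i=1,2$ and refine each to $\Wh{\mathcal{X}_3}$. Because the 0th \v{C}ech cochain complex has no coboundaries, these two refinements must agree as cocycles on $\Wh{\mathcal{X}_3}$.

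For any inclusion $\mathcal{X} \subseteq \mathcal{X}'$ of finite subtrees through $e$, there is a natural vertex map $\phi \colon \Wh{\mathcal{X}'} \to \Wh{\mathcal{X}}$ sending $v$ to the unique vertex of $\Wh{\mathcal{X}}$ whose shadow contains $S^e\left(v\right)$, equivalently the first vertex outside $\mathcal{X}$ along the geodesic from $e$ to $v$. These maps are surjective, compose consistently, and encode refinement of cocycles: pulling a cocycle back along $\phi$ yields its refinement. Writing $\phi_{ij} \colon \Wh{\mathcal{X}_j} \to \Wh{\mathcal{X}_i}$, it suffices to show that $\sigma_1$ is constant on each fibre of $\phi_{01}$: the induced function $\tau$ on $\Wh{\mathcal{X}_0}$ is then a cocycle refining to $\sigma_1$, so $[\tau] = [\sigma]$ and $\mathcal{X}_0$ supports $[\sigma]$.

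The main step is a combinatorial observation. Suppose $v_1, v_1' \in \Wh{\mathcal{X}_1}$ have $\phi_{01}(v_1) = \phi_{01}(v_1') = v_0$. Either $v_0 \notin \mathcal{X}_1$, in which case $v_1 = v_1' = v_0$ and the claim is trivial, or $v_0 \in \mathcal{X}_1 \setminus \mathcal{X}_0$. In the latter case $v_0 \notin \mathcal{X}_2$ (since $\mathcal{X}_0 = \mathcal{X}_1 \intersection \mathcal{X}_2$), and $v_0$ is adjacent to $\mathcal{X}_0 \subseteq \mathcal{X}_2$, so $v_0 \in \Wh{\mathcal{X}_2}$. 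Lift $v_1, v_1'$ to vertices $v_3, v_3'$ of $\Wh{\mathcal{X}_3}$ via the surjective $\phi_{13}$. By compatibility, $\phi_{02}\left(\phi_{23}(v_3)\right) = \phi_{01}\left(\phi_{13}(v_3)\right) = v_0$; since $v_0 \in \Wh{\mathcal{X}_2}$, the fibre $\phi_{02}^{-1}(v_0)$ inside $\Wh{\mathcal{X}_2}$ is just $\{v_0\}$, forcing $\phi_{23}(v_3) = v_0$ and similarly $\phi_{23}(v_3') = v_0$. The agreement of refinements on $\Wh{\mathcal{X}_3}$ then yields $\sigma_1(v_1) = \sigma_2(v_0) = \sigma_1(v_1')$.

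For $v_1, v_1'$ whose $\phi_{01}$-images merely lie in a common component of $\Wh{\mathcal{X}_0}$, chain the above along an edge path in $\Wh{\mathcal{X}_0}$: each edge corresponds to a line in $\mathcal{L}$ and lifts to an edge of $\Wh{\mathcal{X}_1}$ between specific $\phi_{01}$-preimages of its endpoints, so the cocycle condition on $\sigma_1$ propagates equality across edges while the previous paragraph propagates it within a single fibre. The main obstacle is the case analysis above; it relies crucially on $\mathcal{X}_0 = \mathcal{X}_1 \intersection \mathcal{X}_2$, for this is precisely what forces the vertex $v_0$ to be visible already in $\Wh{\mathcal{X}_2}$ and thus arbitrable by $\sigma_2$.
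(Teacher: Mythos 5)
Your proof is correct, and it reaches the conclusion by a genuinely different route from the paper's. The paper argues by induction on the size of the symmetric difference of $\mathcal{X}_1$ and $\mathcal{X}_2$, at each step pruning from $\mathcal{X}_1$ a leaf not contained in $\mathcal{X}_2$ and asserting that the class is still supported on the smaller tree; you instead descend to $\mathcal{X}_1 \intersection \mathcal{X}_2$ in a single step by organising the refinement maps as pullbacks along the vertex maps $\phi$ and proving that $\sigma_1$ is constant on the fibres of $\phi_{01}$. The mathematical heart is the same --- a vertex of $\mathcal{X}_1$ lying outside $\mathcal{X}_2$ appears as a single vertex of $\Wh{\mathcal{X}_2}$, which forces $\sigma$ to be constant on everything beyond it --- but your version makes fully explicit the step the paper dismisses as ``easy to see,'' and the commuting square $\phi_{02}\circ\phi_{23}=\phi_{01}\circ\phi_{13}$ together with the triviality of the fibre $\phi_{02}^{-1}(v_0)$ is a clean way to package it. Two small points worth tightening: first, the assertion that the two refinements ``must agree as cocycles on $\Wh{\mathcal{X}_3}$'' a priori only gives agreement after passing to some further refinement $\mathcal{Y}\supseteq\mathcal{X}_3$, since equality in the direct limit is equality at some later stage; you then recover agreement on $\Wh{\mathcal{X}_3}$ itself because the refinement map $\cC{}^0\left(\mathcal{U}_{\mathcal{X}_3},\mathbb{Z}\right)\to\cC{}^0\left(\mathcal{U}_{\mathcal{Y}},\mathbb{Z}\right)$ is the pullback along a surjective vertex map and hence injective. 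Second, constancy on fibres alone only gives a well-defined function $\tau$ on the vertices of $\Wh{\mathcal{X}_0}$; your final chaining paragraph is what upgrades this to the cocycle condition (every edge of $\Wh{\mathcal{X}_0}$ lifts to an edge of $\Wh{\mathcal{X}_1}$ between the corresponding fibres), so it is an essential part of the argument rather than an afterthought.
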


\begin{proof} We prove this by induction on the number of vertices in the
symmetric difference of $\mathcal{X}_1$ and $\mathcal{X}_2$. If the
symmetric difference is non-empty then without loss of generality
$\mathcal{X}_1$ has a leaf $v$ that is not contained in $\mathcal{X}_2$.
It is easy to see that $\left[\sigma\right]$ is supported on $\mathcal{X}_1
- v$.\end{proof}

As discussed in the introduction, $F$ acts on $\mathcal{D}$ by
homeomorphisms, giving the \v{C}ech cohomology the structure of a right
$F$-module. In terms of Whitehead graphs, any $g\in F$ induces a map
\begin{equation}
g \colon \cH{}^0\left(\mathcal{U}_\mathcal{X}, \mathbb{Z}\right) \to
\cH{}^0\left(g^{-1}\mathcal{U}_\mathcal{X}, \mathbb{Z}\right) = 
\cH{}^0\left(\mathcal{U}_{g^{-1}\mathcal{X}}, \mathbb{Z}\right);
\end{equation}
this map takes an element represented by a $\mathbb{Z}$-labelling of the
connected components of \Wh{\mathcal{X}} to the element represented by the
translate by $g^{-1}$ of this diagram. 

We now aim to find an algorithm that computes a presentation for this
$F$-module. First we describe an algorithm that computes a generating set.
The argument is loosely based on the proof of lemma {4.12} in \cite{CM}.

\begin{thm}\label{thm:generators0}
There exists a finite number $N$, computable from $n$ and $\mathcal{L}$,
such that $\cH{}^0\left(\mathcal{D}, \mathbb{Z}\right)$ is generated as an
abelian group by 0-cycles supported on subtrees of $\mathcal{T}$ with at
most $N$ vertices.
\end{thm}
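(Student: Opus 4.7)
The plan is to set up a peeling induction on the number of vertices in the minimal support. Suppose $\left[\sigma\right]$ is supported minimally on a subtree $\mathcal{X}$. If $\lvert\mathcal{X}\rvert \leq N$ for the bound $N = N\left(n, \mathcal{L}\right)$ to be determined, there is nothing to do; otherwise I would exhibit $\left[\sigma\right] = \left[\sigma_1\right] + \left[\sigma_2\right]$ with $\left[\sigma_1\right]$ supported on a proper subtree of $\mathcal{X}$ and $\left[\sigma_2\right]$ supported on a subtree of size at most some $M = M\left(n, \mathcal{L}\right)$. Iterating on $\left[\sigma_1\right]$ would then express $\left[\sigma\right]$ as an integer combination of cocycles on subtrees of size at most $\max\left(N, M\right)$, so we may take $N = M$.

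To produce the decomposition, pick any leaf $v \neq e$ of $\mathcal{X}$ and compare the Whitehead graphs $\Wh{\mathcal{X}\setminus\{v\}}$ and $\Wh{\mathcal{X}}$. Passing to the larger subtree replaces the vertex $v$ of $\Wh{\mathcal{X}\setminus\{v\}}$ with the $2n-1$ neighbours of $v$ outside $\mathcal{X}$, redistributing the edges incident to $v$ accordingly; consequently the component $C_v$ of $\Wh{\mathcal{X}\setminus\{v\}}$ containing $v$ refines into components $C_1, \dots, C_k$ of $\Wh{\mathcal{X}}$, while every other component is preserved. I then set $\sigma_1$ equal to $\sigma$ on components of $\Wh{\mathcal{X}\setminus\{v\}}$ other than $C_v$ and equal to $\sigma\left(C_1\right)$ on $C_v$, obtaining a cocycle supported on $\mathcal{X}\setminus\{v\}$. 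The difference $\sigma_2 := \sigma - \sigma_1$ then vanishes on every component of $\Wh{\mathcal{X}}$ other than $C_2, \dots, C_k$, on which it takes the values $\sigma\left(C_j\right) - \sigma\left(C_1\right)$.

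The hard part is bounding the support of $\left[\sigma_2\right]$ by a constant independent of $\mathcal{X}$. When $C_v = \{v\}$ is a singleton the components $C_j$ consist of neighbours of $v$ glued together by lines of $\mathcal{L}$ through $v$, so $\left[\sigma_2\right]$ is determined by the local Whitehead pattern at $v$ and, after applying an $F$-translate moving $v$ to $e$, descends to a cocycle supported on a subtree of size one. The general difficulty is that $C_v$ may contain far-away outside vertices, linked to $v$ by lines of $\mathcal{L}$ passing through the parent of $v$, so the $C_j$ can stretch arbitrarily far and it is not immediate that $\left[\sigma_2\right]$ fits on a small subtree. I would address this by a pigeonhole argument in the spirit of \cite[Lemma 4.12]{CM}: the local Whitehead types at a vertex, up to the $F$-action, are finite in number and bounded by a function of $n$ and $\max_i \lvert w_i\rvert$, and refining the choice of leaf $v$ to exploit a repetition of local type forced by a sufficiently large $\mathcal{X}$ should confine $\left[\sigma_2\right]$ to a subtree of size at most $M$. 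The explicit value of $N$ then follows from this enumeration.
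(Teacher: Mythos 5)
There is a genuine gap at exactly the point you flag as ``the hard part'', and the mechanism you gesture at does not fill it. Your decomposition peels a single leaf $v$ and writes $\sigma=\sigma_1+\sigma_2$, where $\sigma_2$ is a combination of indicator cocycles of the components $C_2,\dots,C_k$ of $\Wh{\mathcal{X}}$ into which $C_v$ refines. But the minimal support of the indicator of a component of $\Wh{\mathcal{X}}$ is governed by the global connectivity of $\Wh{\mathcal{X}}$, not by the local Whitehead type at $v$: such a component can contain vertices adjacent to every part of $\mathcal{X}$, so its indicator can require essentially all of $\mathcal{X}$ as support. Pigeonholing on the finitely many translated local types at a single vertex therefore cannot bound the support of $\left[\sigma_2\right]$, since two vertices of identical local type can lie in Whitehead graphs whose components are partitioned completely differently; no ``refined choice of leaf'' repairs this, and this is the entire content of the theorem.

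The paper's argument is structured so that no bounded piece is ever needed. It pigeonholes along a long embedded arc in $\mathcal{X}$ on the pair consisting of the incoming/outgoing edge labels \emph{and} the partition of $\Wh{e}$ induced by $\sigma$ after translation, producing two interior vertices $v$ and $w=g\left(v\right)$ at which these data agree. Letting $A$ be the part of $\mathcal{X}$ beyond $v$ away from $w$, $B$ the part beyond $w$ away from $v$, and $A'=A\cup\left\{v\right\}$, it splices: the matching partitions at $v$ and $w$ make the labelling consistent on $\mathcal{Y}=A'\cup vw^{-1}B$, yielding a cocycle $\tau$ agreeing with $\sigma$ over $A'$, while $\left[\sigma\right]-\left[\tau\right]$ is supported on $\left(\mathcal{X}-A\right)\cup g^{-1}B$. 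Choosing $\left|A\right|\geq\left|B\right|$ makes both supports strictly smaller than $\mathcal{X}$, and induction on the number of vertices concludes --- note that strict decrease on both summands already suffices for your induction, so you do not need the stronger requirement that one summand be of bounded size. To salvage your scheme you would have to replace the single-leaf comparison by this two-point cut-and-splice, in which the repetition of local type is used to make the splice consistent rather than to bound a leftover term.
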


\begin{proof} Let $\left[\sigma\right]$ be a 0-cycle supported on a subtree
$\mathcal{X}$ of $\mathcal{T}$ with more than $N$ vertices, where $N$
is to be chosen later. Then by induction it is sufficient to show that
$\left[\sigma\right]$ can be written as the sum of 0-cycles supported on
strictly smaller subtrees.  The idea is that if $N$ is large enough then
there will be two vertices in $\mathcal{X}$ at which $\left[\sigma\right]$
looks similar and cutting out everything between these vertices allows us to
split $\left[\sigma\right]$ into strictly smaller summands.

$\cH{}^0\left(\mathcal{U}_\mathcal{X}, \mathbb{Z}\right)$ is generated by
0-cycles represented by Whitehead diagrams at $\mathcal{X}$ with one connected component
labelled 1 and the others labelled 0; we can assume without loss of generality
that $\left[\sigma\right]$ is such a 0-cycle. Then $\left[\sigma\right]$ is
can be thought of as a partition of $\Wh{\mathcal{X}}$ into a connected component
and its complement. An example of such a partition is shown pictorially in
figure~\ref{fig:partition}.

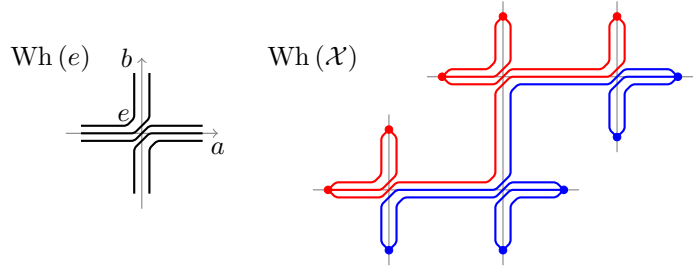
\begin{figure}
\centering

\begin{tikzpicture}[scale=1]

\begin{scope}[shift={(-2.75,0)}] 
  \begin{scope}[gray]
    \draw[->] (-1,0) -- (1,0) node[black,anchor=north] {\(a\)};
    \draw[->] (0,-1) -- (0,1) node[black,anchor=east] {\(b\)};
  \end{scope}

  \draw (-1.2,1) node {\(\Wh{e}\)}; 
  \draw (-0.24,0.24) node {\(e\)}; 

  \begin{scope}[thick,rounded corners=1pt] 
    \draw (-0.8,0.1) -- (-0.2,0.1) -- (-0.1,0.2) -- (-0.1,0.8);
    \draw (-0.8,0) -- (-0.1,0) -- (0.1,0.2) -- (0.1,0.8);
    \draw (-0.8,-0.1) -- (-0.1,-0.1) -- (0.1,0.1) -- (0.8,0.1);
    \draw (-0.1,-0.8) -- (-0.1,-0.2) -- (0.1,0) -- (0.8,0);
    \draw (0.1,-0.8) -- (0.1,-0.2) -- (0.2,-0.1) -- (0.8,-0.1);
  \end{scope}
\end{scope}

\begin{scope}[shift={(2,-0.75)}] 

  \draw (-2.5,1.75) node {\(\Wh{\mathcal{X}}\)}; 

  \begin{scope}[gray] 
    \draw (-2.5,0) -- (1,0);
    \draw (-1,1.5) -- (2.5,1.5);
    \draw (-1.5,1) -- (-1.5,-1);
    \draw (0,2.5) -- (0,-1);
    \draw (1.5,2.5) -- (1.5,0.5);;
  \end{scope}

  \begin{scope} 
    [disc/.style={circle,draw=red,fill=red,very thin,inner sep=0pt,minimum
      size=1mm}]
    \node at (-0.8,1.5) [disc] {};
    \node at (0,2.3) [disc] {};
    \node at (1.5,2.3) [disc] {};
    \node at (-2.3,0) [disc] {};
    \node at (-1.5,0.8) [disc] {};
  \end{scope}

  \begin{scope} 
    [disc/.style={circle,draw=blue,fill=blue,very thin,inner sep=0pt,minimum
      size=1mm}]
    \node at (-1.5,-0.8) [disc] {};
    \node at (0,-0.8) [disc] {};
    \node at (0.8,0) [disc] {};
    \node at (2.3,1.5) [disc] {};
    \node at (1.5,0.7) [disc] {};
  \end{scope}

  \begin{scope}[thick,red,rounded corners=1pt] 
    \draw (-2.3,0) -- (-2.2,0.1) -- (-1.7,0.1) -- (-1.6,0.2) -- (-1.6,0.7)
      -- (-1.5,0.8);
    \draw (-2.3,0) -- (-1.6,0) -- (-1.4,0.2) -- (-1.4,0.7) -- (-1.5,0.8);
    \draw (-0.8,1.5) -- (-0.7,1.6) -- (-0.2,1.6) -- (-0.1,1.7) -- (-0.1,2.2)
      -- (0,2.3);
    \draw (-0.8,1.5) -- (-0.1,1.5) -- (0.1,1.7) -- (0.1,2.2) -- (0,2.3);
    \draw (-2.3,0) -- (-2.2,-0.1) -- (-1.6,-0.1) -- (-1.4,0.1) -- (-0.2,0.1)
      -- (-0.1,0.2) -- (-0.1,1.3) -- (0.1,1.5) -- (1.4,1.5) -- (1.6,1.7)
      -- (1.6,2.2) -- (1.5,2.3);
    \draw (-0.8,1.5) -- (-0.7,1.4) -- (-0.1,1.4) -- (0.1,1.6) -- (1.3,1.6)
      -- (1.4,1.7) -- (1.4,2.2) -- (1.5,2.3);
  \end{scope}

  \begin{scope}[thick,blue,rounded corners=1pt] 
    \draw (0,-0.8) -- (-0.1,-0.7) -- (-0.1,-0.2) -- (0.1,0) -- (0.8,0);
    \draw (0,-0.8) -- (0.1,-0.7) -- (0.1,-0.2) -- (0.2,-0.1) -- (0.7,-0.1)
      -- (0.8,0);
    \draw (1.5,0.7) -- (1.4,0.8) -- (1.4,1.3) -- (1.6,1.5) -- (2.3,1.5);
    \draw (1.5,0.7) -- (1.6,0.8) -- (1.6,1.3) -- (1.7,1.4) -- (2.2,1.4)
      -- (2.3,1.5);
    \draw (-1.5,-0.8) -- (-1.4,-0.7) -- (-1.4,-0.2) -- (-1.3,-0.1) -- (-0.1,-0.1)
      -- (0.1,0.1) -- (0.7,0.1) -- (0.8,0);
    \draw (-1.5,-0.8) -- (-1.6,-0.7) -- (-1.6,-0.2) -- (-1.4,0) -- (-0.1,0)
      -- (0.1,0.2) -- (0.1,1.3) -- (0.2,1.4) -- (1.4,1.4) -- (1.6,1.6)
      -- (2.2,1.6) -- (2.3,1.5);
  \end{scope}
\end{scope}

\end{tikzpicture}

\caption{A partition of a disconnected Whitehead graph into two connected
components. An assignment of the integer 1 to the red part and 0 to the blue
part represents an element of $\cH{}^0\left(\mathcal{D}, \mathbb{Z}\right)$.
In this example $F$ is free on two generators and the line pattern is
generated by the word $a^2bab$.}
\label{fig:partition}
\end{figure}

Suppose that $N$ is large enough that any subtree of $\mathcal{T}$ with
more than $N$ vertices is guaranteed to contain an embedded arc of length at
least $M+2$, where $M$ is a computable function of $n$ and
$\mathcal{L}$ to be chosen later. Then let $v_1, \ldots, v_M$ be the
interior vertices of such an embedded arc in $\mathcal{X}$. Traversing this
arc in the direction from $v_1$ to $v_M$, record for each vertex $v_i$ an
ordered pair $\left(s_i, t_i\right)$ of elements of $\mathcal{B}^\pm$,
where $s_i$ labels the incoming edge at $v_i$ of the embedded arc at and
$t_i$ labels the outgoing edge.

Suppose that $M$ is large enough that at least $K$ of these pairs are
equal. Here $K$ is a computable function of $\mathcal{L}$ to be chosen
later. Then let $v_{i_1}, \ldots, v_{i_K}$ be vertices with equal associated
edge pairs. The edges of each $\Wh{v_{i_j}}$ extend to edges in
$\Wh{\mathcal{L}}$, hence the partition of \Wh{\mathcal{L}} associated to
$\left[\sigma\right]$ gives a partition of the edges of $\Wh{v_{i_j}}$ into
a subset and its complement. 

Treating the $v_{i_j}$ as elements of $F$, the translation of
$\Wh{v_{i_j}}$ by $v_{i_j}^{-1}$ gives a partition on the edges of
$\Wh{e}$. There is a finite number of such partitions; let $K$ be greater
than that number. Then we obtain $v, w=g\left(v\right) \in \left\{v_{i_1},
\ldots v_{i_K}\right\}$ such that these translates of the associated
partitions agree.

Now we define two disjoint subsets of $\mathcal{X}$. Let $A$ be the
vertices $u \neq v$ of $\mathcal{X}$ such that the geodesic in
$\mathcal{T}$ from $w$ to $u$ passes through $v$, and let $B$ be the
same with the r\^oles of $v$ and $w$ reversed. Without loss of generality,
$A$ contains at least as many vertices as $B$ does. Then let $A' = A
\union \left\{v\right\}$. See figure~\ref{fig:cutting}.

\begin{figure}
\centering
\begin{tikzpicture}[scale=1]

\begin{scope}[gray] 
  \draw (-4,0) -- (4,0);
  \draw (-4,1.5) -- (-2,1.5);
  \draw (2,1.5) -- (4,1.5);
  \draw (-3,-1) -- (-3,2.5);
  \draw (-1.5,-1) -- (-1.5,1);
  \draw (0,-1) -- (0,1);
  \draw (1.5,-1) -- (1.5,1);
  \draw (3,-1) -- (3,2.5);
\end{scope}

\begin{scope} 
  [disc/.style={circle,draw=gray,fill=gray,very thin,inner sep=0pt,minimum
    size=1mm}]
  \node at (-1.74,0.24) {\(v\)};
  \node at (1.26,0.24) {\(w\)};
  \node at (-1.5,0) [disc] {};
  \node at (1.5,0) [disc] {};
\end{scope}

\begin{scope}[thick,rounded corners=1pt] 
  \foreach \x in {-1.5,1.5} {
    \begin{scope}[red] 
      \draw (-0.7+\x,0.1) -- (-0.2+\x,0.1) -- (-0.1+\x,0.2) -- (-0.1+\x,0.7);
      \draw (-0.7+\x,0) -- (-0.1+\x,0) -- (0.1+\x,0.2) -- (0.1+\x,0.7);
      \draw (-0.7+\x,-0.1) -- (-0.1+\x,-0.1) -- (0.1+\x,0.1) -- (0.7+\x,0.1);
    \end{scope}

    \begin{scope}[blue] 
      \draw (0.7+\x,-0.1) -- (0.2+\x,-0.1) -- (0.1+\x,-0.2) -- (0.1+\x,-0.7);
      \draw (0.7+\x,0) -- (0.1+\x,0) -- (-0.1+\x,-0.2) -- (-0.1+\x,-0.7);
    \end{scope}
  }
\end{scope}

\begin{scope}[thick,dashed]
  \draw (-4.25,-1.25) rectangle (-0.5,2.75) node[anchor=north east] {\(A'\)};
  \draw (2.5,-1.25) -- (4.25,-1.25) -- (4.25,2.75) -- (1.75,2.75) --
    (1.75,0.25) -- (2.5,0.25) -- cycle;
  \draw (4.25,2.75) node[anchor=north east] {\(B\)};
\end{scope}

\end{tikzpicture}
\caption{$v$ and $w$ are chosen so that $\sigma$ induces the same
partition on \Wh{v} as on \Wh{w}. Then $\tau$ is an element of
$\cH^0\left(\mathcal{U}_{A \union vw^{-1} B}, \mathbb{Z}\right)$ chosen to
induce the same partition on \Wh{A'} as $\sigma$ does.}
\label{fig:cutting}
\end{figure}

We now cancel off the part of $\left[\sigma\right]$ supported on $A$.  Let
$\mathcal{Y} = A' \union vw^{-1} B$, where we treat $v$ and $w$ as
elements of the group $F$. As described above, $\sigma$ induces a partition
on the edges of $\Wh{u}$ for each vertex $u$ in $A'$ and $B$, and hence
by translation on $\Wh{u}$ for each vertex $u$ in $\mathcal{Y}$. The
partitions at $v \in A'$ and the vertex of $vw^{-1}B$ adjacent to $v$ are
consistent with respect to the splicing map, so we obtain a partition of the
graph $\Wh{\mathcal{Y}}$. Hence we can define $\tau \in
\cH^{0}\left(\mathcal{Y}, \mathbb{Z}\right)$ to be a cycle represented by
assigning the integer 1 to one component of $\Wh{\mathcal{Y}}$ and 0 to the
others in a way that agrees at the vertices of $A'$ with the labelling of
components represented by $\sigma$.

Then $\left[\tau\right]$ is supported on $\mathcal{Y}$, which contains at
most $\left|A'\right| + \left|B\right| < \left|\mathcal{X}\right|$ vertices,
and $\left[\sigma\right]-\left[\tau\right]$ is supported on $\left(
\mathcal{X} - A\right) \union g^{-1}B$, which has fewer vertices than
$\mathcal{X}$ since $A'$ has more vertices than $B$. \end{proof}

\begin{rem} We can give bounds on the function $N\left(\mathcal{L}, n\right)$
explicitly: let $k$ be the number of edges in $\Wh{e}$; this is equal to
the sum of the lengths of the words that generate $\mathcal{L}$. Then:
\begin{align}
N &= \left(2n\right)^{\left(2n\right)^2\left(2^k+1\right)+1}\left(1+2n\right)+1
\end{align}
\end{rem}

\begin{cor} $\cH{}^0\left(\mathcal{D}, \mathbb{Z}\right)$ has a computable
finite generating set.\end{cor}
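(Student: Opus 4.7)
The plan is to combine Theorem~\ref{thm:generators0} with an enumeration of $F$-orbits of small subtrees. By the theorem, the group $\cH{}^0\left(\mathcal{D}, \mathbb{Z}\right)$ is generated as an abelian group by 0-cycles supported on subtrees of $\mathcal{T}$ with at most $N$ vertices, and $N$ is explicitly computable from the remark. Since $F$ acts vertex-transitively on $\mathcal{T}$, every such subtree is an $F$-translate of one containing $e$, and because $\mathcal{T}$ has finite valence $2n$, there are only finitely many subtrees of $\mathcal{T}$ containing $e$ with at most $N$ vertices. Hence it suffices to take, for each such subtree $\mathcal{X}$ containing $e$, a generating set of the abelian group $\cH{}^0\left(\mathcal{U}_\mathcal{X}, \mathbb{Z}\right)$, and collect these finitely many elements as the $F$-module generating set.

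Concretely, I would first compute $N$ from the stated formula, then enumerate the finite list $\mathcal{X}_1, \ldots, \mathcal{X}_m$ of subtrees of $\mathcal{T}$ containing $e$ with at most $N$ vertices. For each $\mathcal{X}_i$, one builds $\Wh{\mathcal{X}_i}$ algorithmically: its vertices are the finitely many neighbours of $\mathcal{X}_i$ in $\mathcal{T}$, and its edges are determined by checking, for each translate $g w_j^k$ of a generating word of the line pattern that could cross $\mathcal{X}_i$, whether both endpoints exit through the indicated pair of neighbours (there are only finitely many relevant translates since $\mathcal{X}_i$ is finite and the words $w_j$ have bounded length). Computing the connected components of the finite graph $\Wh{\mathcal{X}_i}$ then gives a canonical basis of $\cH{}^0\left(\mathcal{U}_{\mathcal{X}_i}, \mathbb{Z}\right) \isom \mathbb{Z}^{c_i}$, namely the indicator cocycles of its components.

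The remaining point is to observe that the $F$-translates of these indicator cocycles generate $\cH{}^0\left(\mathcal{D}, \mathbb{Z}\right)$ as an $F$-module: any 0-cycle supported on an arbitrary subtree $\mathcal{X}$ with at most $N$ vertices is the $g$-translate of one supported on $g^{-1}\mathcal{X}$, which contains $e$ after further translation, so it is an integer combination of the chosen generators acted on by $F$. This step, together with Theorem~\ref{thm:generators0}, immediately gives finite generation as an $F$-module. The main (mild) obstacle is bookkeeping: we must verify that the combinatorial data of $\Wh{\mathcal{X}_i}$ depends only on $\mathcal{X}_i$ and the words $w_j$, so that the whole procedure is effective; this is routine once one observes that lines in $\mathcal{L}$ meeting $\mathcal{X}_i$ are enumerated by their bounded-length description as $F$-translates of the $w_j$.
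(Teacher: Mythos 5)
Your proposal is correct and follows essentially the same route as the paper: reduce via Theorem~\ref{thm:generators0} and the $F$-action to 0-cycles supported near $e$, then compute generators of the finite-level groups $\cH{}^0\left(\mathcal{U}_\mathcal{X}, \mathbb{Z}\right)$ from the connected components of the Whitehead graphs. The only cosmetic difference is that the paper works with the single ball of radius $N$ centred at $e$ (using that a class supported on a subtree is also supported on any larger subtree), whereas you enumerate all subtrees containing $e$ with at most $N$ vertices; both are effective and yield the same conclusion.
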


\begin{proof} Theorem~\ref{thm:generators0} implies that $\cH{}^0\left(
\mathcal{D}, \mathbb{Z} \right)$ is generated as an $F$-module by the set of
0-cycles supported on subtrees of the ball of radius $N$ centred at $e$.
If $\mathcal{X}$ is this ball then $\cH{}^0\left(\mathcal{U}_\mathcal{X},
\mathbb{Z}\right)$ has a computable finite generating set as a
$\mathbb{Z}$-module, since $\Wh{\mathcal{X}}$ can be partitioned into
its connected components algorithmically.\end{proof}

\begin{cor} There is an algorithm that determines whether or not
$\mathcal{D}$ is connected. \end{cor}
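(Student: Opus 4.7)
My plan is to show that $\mathcal{D}$ is connected if and only if the Whitehead graph $\Wh{B_N\left(e\right)}$ is connected, where $B_N\left(e\right)$ is the ball of radius $N$ in $\mathcal{T}$ centred at $e$ and $N$ is the constant from Theorem~\ref{thm:generators0}, which is computable by Remark~1. Since $\Wh{B_N\left(e\right)}$ is a finite graph computable from $\mathcal{L}$ and connectedness of a finite graph is algorithmically decidable, this yields the required algorithm.

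First I would reduce the problem to cohomology. The space $\mathcal{D}$ is connected if and only if $\cH{}^0\left(\mathcal{D}, \mathbb{Z}\right) \isom \mathbb{Z}$, generated by the class of the constant function $1$. Because $F$ acts on $\mathcal{D}$ by homeomorphisms, this class is $F$-invariant, so the $F$-submodule it generates is just $\mathbb{Z}$. On the other hand, the preceding corollary (via Theorem~\ref{thm:generators0}) shows that $\cH{}^0\left(\mathcal{D}, \mathbb{Z}\right)$ is generated as an $F$-module by the image of the refinement map $\cH{}^0\left(\mathcal{U}_{B_N\left(e\right)}, \mathbb{Z}\right) \to \cH{}^0\left(\mathcal{D}, \mathbb{Z}\right)$, since every 0-cycle supported on a subtree with at most $N$ vertices lies (up to $F$-translation) in a sub-subtree of $B_N\left(e\right)$ and hence factors through refinement. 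Combining these observations, $\cH{}^0\left(\mathcal{D}, \mathbb{Z}\right) \isom \mathbb{Z}$ if and only if the image of the refinement map is contained in the constant subgroup $\mathbb{Z}$.

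Next I would make this image concrete. A 0-cocycle $\left\{n_i\right\}$ on the cover $\left\{U_i\right\} = \mathcal{U}_{B_N\left(e\right)}$ descends to the continuous function $f \colon \mathcal{D} \to \mathbb{Z}$ with $f\left(x\right) = n_i$ for $x \in U_i$; this is well-defined by the cocycle condition and continuous because the $U_i$ are open. If $\Wh{B_N\left(e\right)}$ is connected, every cocycle on it is constant, so $f$ is constant and the image lies in $\mathbb{Z}$, whence $\mathcal{D}$ is connected. Conversely, if $\Wh{B_N\left(e\right)}$ is disconnected, labelling one component by $1$ and the rest by $0$ gives a non-constant cocycle and hence a non-constant continuous function on $\mathcal{D}$, so $\mathcal{D}$ is disconnected.

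The algorithm is therefore: compute $N$ using the formula in Remark~1, enumerate the lines of $\mathcal{L}$ crossing $B_N\left(e\right)$ to build $\Wh{B_N\left(e\right)}$, and test its connectedness. The main obstacle I anticipate is justifying the $F$-generation claim in the second paragraph: namely that the image of $\cH{}^0\left(\mathcal{U}_{B_N\left(e\right)}, \mathbb{Z}\right)$ really does account for every element of $\cH{}^0\left(\mathcal{D}, \mathbb{Z}\right)$ after allowing $F$-translates. This depends on Theorem~\ref{thm:generators0} together with the observation that $\mathcal{U}_{B_N\left(e\right)}$ may be chosen to refine $\mathcal{U}_\mathcal{X}$ for every sub-subtree $\mathcal{X}$ containing $e$, as noted after Lemma~\ref{lem:construction}.
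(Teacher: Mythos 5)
Your proposal is correct and follows essentially the same route as the paper: reduce connectedness of $\mathcal{D}$ to whether $\cH{}^0\left(\mathcal{D},\mathbb{Z}\right)\isom\mathbb{Z}$ and then invoke the finite generating set supplied by Theorem~\ref{thm:generators0}. Your only addition is to make the final check explicit as connectedness of the finite graph $\Wh{B_N\left(e\right)}$, together with the observation that a disconnected Whitehead graph yields a genuinely non-constant locally constant function on $\mathcal{D}$ (so the check is decidable); this usefully fills in the step the paper leaves terse, but it is not a different argument.
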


This corollary is proved by a different argument in~\cite{CM}. In that paper it
is shown that, after simplifying $\Wh{e}$ as much as possible using Whitehead
moves, $\mathcal{D}$ is connected if and only if $\Wh{e}$ is connected.

\begin{proof} $\mathcal{D}$ is connected if and only if $\cH{}^0\left(
\mathcal{D}, \mathbb{Z}\right)$ is generated by the cochain supported on the
trivial covering that assigns the integer $1$ to the only open set in that
covering; in this case it is isomorphic to $\mathbb{Z}$ with trivial $F$
action. Equivalently, $\mathcal{D}$ is connected if and only if any $\sigma
\in \cH{}^0\left(\mathcal{U}_\mathcal{X}, \mathbb{Z}\right)$ is represented by
the assignment of the same integer to each component of $\Wh{\mathcal{X}}$
for all subtrees $\mathcal{X} \subset \mathcal{T}$.  It is sufficient to
check this on a generating set, and we have already shown that $\cH{}^0\left(
\mathcal{D}, \mathbb{Z}\right)$ has a computable finite generating
set.\end{proof}

We now have an algorithm that gives a finite set $\left[\sigma_1\right],
\ldots , \left[\sigma_k\right]$ of cohomology classes that generate
$\cH^0\left(\mathcal{D}, \mathbb{Z}\right)$ as a right $F$-module. This is
equivalent to a surjection $\left(p \colon \mathbb{Z}F^k \to \cH^0 \left(
\mathcal{D}, \mathbb{Z}\right)\right)$ of right $F$-modules. Let $e_i$ be
the $i$th basis vector in the free module, and let it be mapped to
$\left[\sigma_i\right]$ under $p$. To complete the computation of a
presentation for of $\cH^0\left(\mathcal{D}, \mathbb{Z}\right)$ we need an
algorithm that computes a generating set for the kernel of $p$.

For each $\left[\sigma_i\right]$ let $\mathcal{X}_i$ be the support of
$\left[\sigma_i\right]$ is supported. A general element $x \in
\mathbb{Z}F^k$ is of the form 
\begin{align}\label{eqn:freemodule}
x = \sum\limits_{i, j} n_{ij}\left(e_{j}g_{ij}\right), \text{where $n_{ij} \in
  \mathbb{Z}$, $g_{ij} \in F$}.
\end{align}
Define the support of $x$ to be 
\begin{align}
\hull \left(\bigcup\limits_{i, j} g_{ij}^{-1}\mathcal{X}_j\right).
\end{align}
Note that the support of $px$ is contained in the support of $x$.

We can now state and prove a theorem that shows that the kernel of $p$ is
generated by elements of bounded size, in the same way that
Theorem~\ref{thm:generators0} shows that $\cH{}^0\left(\mathcal{D},
\mathbb{Z}\right)$ is generated by elements of bounded size.

\begin{thm}\label{thm:relators0} $\ker p$ is generated as an abelian group by
elements whose supports have at most $N$ vertices, where $N$ is a
computable function of $\mathcal{L}$ and $n$.\end{thm}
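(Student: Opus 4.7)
The plan is to mirror the strategy of Theorem~\ref{thm:generators0}: given $x \in \ker p$ whose support $\mathcal{X}$ has more than $N$ vertices, I decompose $x$ as a sum of two elements of $\ker p$ each with strictly smaller support. Iterating the reduction then proves the theorem by induction on the number of vertices in the support.

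For $N$ chosen sufficiently large, $\mathcal{X}$ contains a long embedded arc whose interior vertices $v_1, \ldots, v_M$ lie at distance at least $r+1$ from the boundary of $\mathcal{X}$ and from each other, where $r$ is the maximum diameter of the subtrees $\mathcal{X}_i$. At each $v_i$ I record a state consisting of the ordered pair $(s_i, t_i) \in \mathcal{B}^\pm \times \mathcal{B}^\pm$ of incoming and outgoing arc-edge labels together with the \emph{local profile}
\[
\rho_i = \sum_{\alpha \,:\, g_\alpha v_i \in \mathcal{X}_{j_\alpha}} n_\alpha \, e_{j_\alpha} g_\alpha v_i \,,
\]
the right-translate by $v_i$ of those summands $n_\alpha (e_{j_\alpha} g_\alpha)$ of $x$ whose support contains $v_i$. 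The distance condition ensures each summand of $x$ contributes to at most one profile and that its entire support lies in a small neighbourhood of the corresponding $v_i$. Each $\rho_i$ lies in the finite-rank free abelian group $V$ spanned by the symbols $e_j h$ with $h \in \mathcal{X}_j$.

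The shape of $\rho_i$ (its support in $V$) together with $(s_i, t_i)$ take only finitely many values, with a bound computable from $n$ and $\mathcal{L}$, so by pigeonhole many vertices among $v_1, \ldots, v_M$ share the same edge-label pair and the same shape. The principal obstacle is that the coefficients of $\rho_i$ on its fixed shape are \emph{a priori} unbounded integers, so a direct pigeonhole cannot yield two vertices with coinciding profiles. To sidestep this I exploit the constraint $px = 0$: for any two vertices $v_{i_a}, v_{i_b}$ sharing a shape, the element obtained from $x$ by exchanging the local contribution at one for the translate of that at the other differs from $x$ by a bounded-support element of $\ker p$. Subtracting such elements iteratively (the coefficients live in a finitely generated abelian group, so the process terminates), I reduce to the situation in which two among the shape-matched vertices have exactly equal profiles.

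Call those two vertices $v$ and $w$ and set $g = wv^{-1}$. Following the proof of Theorem~\ref{thm:generators0}, define $A$ to be the vertices of $\mathcal{X}$ on the $v$-side of $w$ (excluding $v$) and $B$ symmetrically; swap $v$ and $w$ if necessary so that $|A| \geq |B|$, and set $A' = A \union \{v\}$, $B' = B \union \{w\}$. Define
\[
y = \sum_{\alpha \,:\, g_\alpha^{-1} \mathcal{X}_{j_\alpha} \subset A'} n_\alpha (e_{j_\alpha} g_\alpha) + \sum_{\alpha \,:\, g_\alpha^{-1} \mathcal{X}_{j_\alpha} \subset B'} n_\alpha (e_{j_\alpha} g_\alpha wv^{-1}) \,.
\]
This $y$ is supported on $\mathcal{Y} = A' \union vw^{-1} B$, which has strictly fewer vertices than $\mathcal{X}$. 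The matching of profiles at $v$ and $w$ ensures that the local contributions at $v$ coming from $y$ reproduce those from $x$, so $py = 0$ away from $v$ by $F$-equivariance of $p$, and also at $v$ by the matching; hence $y \in \ker p$. Similarly $x - y$ is supported on $(\mathcal{X} - A) \union vw^{-1} B$, which has strictly fewer vertices than $\mathcal{X}$ since $|A'| > |B|$. The decomposition $x = y + (x - y)$ then completes the inductive step, and the bound on $N$ follows by incorporating the buffer $r$ and the rank of $V$ into the bound from Theorem~\ref{thm:generators0}.
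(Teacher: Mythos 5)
Your reduction stalls at its central claim, namely that the spliced element $y$ lies in $\ker p$. Matching the local profiles at $v$ and $w$ only controls the summands of $x$ whose supports meet $v$ or $w$; it says nothing about $p\bigl(\sum_{A'} n_\alpha e_{j_\alpha} g_\alpha\bigr)$ versus $p\bigl(\sum_{B'} n_\alpha e_{j_\alpha} g_\alpha\bigr)\cdot wv^{-1}$, each of which depends on \emph{all} the summands supported in $A'$ and $B'$ respectively. The assertion ``$py=0$ away from $v$ by $F$-equivariance and at $v$ by the matching'' does not parse: $py$ is a single class in $\cH{}^0\left(\mathcal{D},\mathbb{Z}\right)$, and the constraint $px=0$ is global, not a vertex-by-vertex condition you can verify locally. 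The paper's proof accepts that the naive split has $py\neq 0$: it cuts $x$ at a single separating vertex $v$ into $y$ (summands supported in one component $A$) and $x-y$, observes that $py=-p\left(x-y\right)$ is therefore supported on $A\intersection C$ by Lemma~\ref{lem:intersections} (where $C$ is a $D$-ball about $v$), and then corrects by a preimage $w$ of $py$ whose support has at most $K=2^LM$ vertices --- a quantitative bound extracted by re-running the proof of Theorem~\ref{thm:generators0}. That correction-term argument is the real content of the theorem and is absent from your proof.

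The preliminary ``iterative subtraction to equalise profiles'' is also unjustified: exchanging the local contribution at $v_{i_b}$ for the translate of that at $v_{i_a}$ changes $px$ by the image of the (translated) difference of profiles, which has no reason to vanish, so the swap elements need not lie in $\ker p$; and ``the coefficients live in a finitely generated abelian group, so the process terminates'' identifies no quantity that decreases. Note finally that the whole pigeonhole apparatus is unnecessary here: because the paper cuts at an \emph{arbitrary} vertex separating the support into two large pieces and pays for it with the correction term, the unbounded-coefficient obstruction you correctly identified simply never arises.
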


\begin{proof} Our approach here is similar to that in the proof of
Theorem~\ref{thm:generators0}: we show that---for sufficiently large
(computable) $N$---an element of $\ker p$ supported on a set with more
than $N$ vertices can be written as the sum of two elements of $\ker p$
supported on strictly smaller sets.

Let $D$ be the maximum of the diameters of the $\mathcal{X}_i$. Let a ball
of diameter $D$ contain $L$ vertices.

From the proof of Theorem~\ref{thm:generators0} it is clear that in picking a
preimage $x$ under $p$ of an element $\left[\sigma\right] \in
\cH{}^0\left( \mathcal{D}, \mathbb{Z}\right)$ it might well be necessary for
the support of $x$ to be strictly larger than the support of
$\left[\sigma\right]$. We will need to be able to bound the size of the
support of $x$ for $\left[\sigma\right]$ supported on a ball of radius
at most $D$. We deal with this first.

With some care, the proof of Theorem~\ref{thm:generators0} gives an explicit
bound. At each step, the cochain is split into two pieces, each supported on a
set with strictly fewer vertices. Hence, since $\left[\sigma\right]$ is
supported on a set with $L$ vertices, it can certainly be written as a
$\mathbb{Z}$-linear combination of at most $2^L$ elements of our generating
set. So if each generator has at most $M$ vertices, any
$\left[\sigma\right]$ supported on a ball of radius $D$ has a preimage
supported on a set with at most $2^L M$ vertices. By construction, this set
can be taken to be connected. Let $K=2^L M$.

Let $N$ be large enough that any subtree $\mathcal{X}$ of $\mathcal{T}$
with at least $N$ vertices contains a vertex $v$ such that $\mathcal{X} -
v$ is the union of two (disconnected) subgraphs of $\mathcal{X}$ each with
at least $K+L$ vertices. For example, this holds if $\mathcal{X}$ is
guaranteed to contain an embedded arc of length at least $2\left(K+L\right)
+1$. Then suppose that some relator $x \in \ker p$ is supported on a subtree
$\mathcal{X} \subset \mathcal{T}$ with at least $N$ vertices. Let $v$ be
as in the definition of $N$. Then we aim to divide $x$ as the sum of two
smaller relators by cutting at $v$.

$x$ is of the form of equation~\ref{eqn:freemodule} and is such that
$g_{ij}^{-1}\mathcal{X}_j \subset \mathcal{X}$ for each pair $i, j$. Let
$A$ and $B$ be the two components of $\mathcal{X} - v$ as described
above, and let $C$ be the ball in $\mathcal{X}$ of radius $D$ centred at
$v$. Let $y \in \mathbb{Z}F^k$ be the sum of those summands of $x$ in
equation~\ref{eqn:freemodule} whose supports are contained in $A$.  Then the
support of $y$ is a subset of $A$ and the support of $x-y$ is a subset of
$B \union C$. 

Roughly, $y$ and $x-y$ will be the two desired smaller relators whose sum
is $x$. However $py \neq 0$, so we shall need to add a small correction
term.  In order to ensure that the correction term is indeed small (in the
sense of having small support) we use Lemma~\ref{lem:intersections}. 

Since $py = -p\left(x-y\right)$, $py$ is supported on $A \intersection
\left(B \union C\right) = A \intersection C$. This is a subtree of a tree of
diameter $2D$, so by assumption $py$ has a preimage $w$ under $p$ that
is supported on a set with at most $K$ vertices. Then $p\left(y-w\right) =
0$ and $x = \left(y-w\right) + \left(x-y+w\right)$ so it remains to show
that $y-w$ and $x-y+w$ have strictly smaller supports than $x$. But the
support of $x$ has $\left|A\right| + \left|B\right| + 1$ vertices, while
$y-w$ and $x-y+w$ are supported on sets with at most $\left|A\right| + K$
and $\left| B \right| + \left|C\right| + K$ vertices respectively.  $\left|
A \right|$ and $\left|B\right|$ have at least $K + \left|C\right|$
vertices, so this completes the proof.\end{proof}

\begin{cor} There is an algorithm that computes a finite presentation for
$\cH{}^0\left(\mathcal{D}, \mathbb{Z}\right)$.\end{cor}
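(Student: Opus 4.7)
The plan is to combine the previous corollary (which gives a generating set) with Theorem~\ref{thm:relators0} (which bounds the supports of relators) into an explicit computation of a presentation.

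First I would invoke the previous corollary to obtain a computable finite generating set $\left[\sigma_1\right], \ldots , \left[\sigma_k\right]$, yielding a surjection $p \colon \mathbb{Z}F^k \to \cH{}^0\left(\mathcal{D}, \mathbb{Z}\right)$ of right $F$-modules; it remains only to compute a finite $F$-module generating set for $\ker p$. By Theorem~\ref{thm:relators0}, $\ker p$ is generated as an abelian group by elements whose supports have at most $N$ vertices for some computable $N$. The $F$-action on $\mathbb{Z}F^k$ by right multiplication translates these supports, so any such generator may be translated so that its support contains $e$; there are only finitely many subtrees of $\mathcal{T}$ of size at most $N$ containing $e$, forming a finite collection $\mathcal{S}$, and $\ker p$ is thus generated as an $F$-module by elements whose supports lie in $\mathcal{S}$.

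For each $\mathcal{X} \in \mathcal{S}$ I would then compute $\ker p \intersection M_\mathcal{X}$, where $M_\mathcal{X} \subset \mathbb{Z}F^k$ is the subgroup of elements whose supports are contained in $\mathcal{X}$. For each generator index $j$, only finitely many $g \in F$ satisfy $g^{-1}\mathcal{X}_j \subseteq \mathcal{X}$, since $g$ is determined by the image of any single vertex of $\mathcal{X}_j$ under $g^{-1}$, which must lie in the finite set $\mathcal{X}$; hence $M_\mathcal{X}$ is a finitely generated free abelian group with an explicit basis. The restriction $p|_{M_\mathcal{X}}$ takes values in the 0-cocycles on $\mathcal{U}_\mathcal{X}$---that is, the $\mathbb{Z}$-valued functions on connected components of $\Wh{\mathcal{X}}$---a free abelian group of finite rank. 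The key point is that $x \in M_\mathcal{X}$ lies in $\ker p$ if and only if $p\left(x\right)$ already vanishes as a cocycle on $\mathcal{U}_\mathcal{X}$: the refinement maps on 0-cocycle groups are injective, because for every vertex $a$ of $\Wh{\mathcal{X}}$ and every refining subtree $\mathcal{Y} \supseteq \mathcal{X}$ some vertex $v$ of $\Wh{\mathcal{Y}}$ has its shadow $S^e\left(v\right)$ contained in $S^e\left(a\right)$, so the restriction of a nonzero cocycle is nonzero. Testing membership in $\ker p$ therefore reduces to checking that an explicit integer vector vanishes, and $\ker p \intersection M_\mathcal{X}$ is computable as the kernel of an integer matrix via Smith normal form.

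Taking the union over $\mathcal{X} \in \mathcal{S}$ of generating sets for $\ker p \intersection M_\mathcal{X}$ yields a finite $F$-module generating set for $\ker p$, which together with the $\left[\sigma_i\right]$ gives the desired finite presentation. I expect the main technical point to be the injectivity of refinement on 0-cocycles: without it, one would need to prove termination of a search over ever-larger subtrees on which $p\left(x\right)$ might only later vanish, whereas with injectivity the kernel computation is bounded a priori and reduces to integer linear algebra.
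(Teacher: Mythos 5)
Your proposal is correct and follows essentially the same route as the paper: translate relators so their supports lie in a fixed finite ball around $e$, observe that only finitely many translates of the $e_j$ can contribute, and reduce the kernel computation to integer linear algebra via Smith normal form. The one point you make explicit that the paper leaves implicit is the injectivity of the refinement maps on $0$-cocycles, which is indeed what guarantees that membership in $\ker p$ can be tested at the finite stage $\mathcal{U}_\mathcal{X}$ rather than requiring a search over larger subtrees; your justification of that injectivity is sound.
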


\begin{proof} Note that $F$ acts on $\mathbb{Z}F^k$ by translation in the
sense that if the support of $x \in \mathbb{Z}F^k$ is $\mathcal{X}$ then
the support of $xg$ is $g^{-1}\mathcal{X}$. Hence if $N$ is as in the
statement of Theorem~\ref{thm:relators0} then that theorem shows that $\ker
p$ is generated as an $F$-module by those of its elements that are supported
on a ball of radius $N$ ball centred at $e$. 

In other words, $\ker p$ is generated by its intersection with the set of
those $\mathbb{Z}$-linear combinations of translates of the $\left\{
e_i\right\}$ by $F$ whose supports are contained in this ball of radius
$N$.  To find all such linear combinations is simply to solve a finite
dimensional $\mathbb{Z}$-linear equation, which can be done algorithmically,
for example using Smith normal form.\end{proof}

\section{Computing \texorpdfstring{$\cH{}^1\left(\mathcal{D}, \mathbb{Z}
\right)$}{H1(D,Z)}} \label{sec:1st}

$\cH{}^1\left(\mathcal{U}_\mathcal{X}, \mathbb{Z}\right)$ is the quotient of
$\cC{}^1\left(\mathcal{U}_\mathcal{X}, \mathbb{Z}\right)$ by
$d\cC{}^0\left(\mathcal{U}_\mathcal{X}, \mathbb{Z}\right)$, since
$\cC{}^2\left(\mathcal{U}_\mathcal{X}, \mathbb{Z}\right)$ is trivial. Since
taking direct limits of families of $\mathbb{Z}$-modules is an exact functor,
$\cH{}^1\left(\mathcal{D}, \mathbb{Z}\right)$ is also a quotient:
\begin{center}\begin{tikzpicture}[>=angle 90]
\matrix (a) [matrix of nodes, row sep=3em, column sep = 2em, text height = 
  1.5ex, text depth=0.25ex]
{
  $0$ & 
  $d\varinjlim_\mathcal{X}\cC{}^0\left(\mathcal{U}_\mathcal{X},
    \mathbb{Z} \right)$ & 
  $\varinjlim_\mathcal{X} \cC^1\left( \mathcal{U}_\mathcal{X}, \mathbb{Z}
    \right)$ & 
  $\cH{}^1\left(\mathcal{D}, \mathbb{Z}\right)$ &
  $0$ \\
};
\path[->](a-1-1) edge (a-1-2)
  (a-1-2) edge (a-1-3)
  (a-1-3) edge (a-1-4)
  (a-1-4) edge (a-1-5);
\end{tikzpicture}\end{center}
is exact. As in the previous section, each of these abelian groups can be
endowed with the structure of an $F$-module so that the homomorphisms in the
short exact sequence are homomorphisms of $F$-modules.

Now finding a presentation for $\cH{}^1\left(\mathcal{D}, \mathbb{Z}\right)$
is equivalent to finding a presentation for $\varinjlim_\mathcal{X}
\cC^1\left( \mathcal{U}_\mathcal{X}, \mathbb{Z}\right)$ and a generating set
for $d\varinjlim_\mathcal{X}\cC{}^0\left(\mathcal{U}_\mathcal{X}, \mathbb{Z}
\right)$. We present an algorithm that does the former in
theorems~\ref{thm:generators1} and~\ref{thm:relators1} and an algorithm that
does the latter in Lemma~\ref{lem:image}.

As in the previous section, cochains have a convenient representation in terms
of the Whitehead graph. A 1-cochain (with respect to an open cover
$\mathcal{U}$) is a map that associates an integer to each pair $U_1, U_2
\in \mathcal{U}$ with $U_1 \intersection U_2 \neq \emptyset$.  Equivalently,
if $\mathcal{U}$ is the open cover associated to a Whitehead graph
$\Wh{\mathcal{X}}$, this is the assignment of an integer to each edge in the
Whitehead graph, with the restriction that if two edges connect the same pair
of vertices then they are assigned the same integer. Refinement to the open
cover associated to a larger Whitehead graph preserves the labelling of the old
edges, and assigns the integer 0 to each new edge.

\begin{thm}\label{thm:generators1} There is a computable function $N$ of
$\mathcal{L}$ and $n$ so that $\varinjlim \cC{}^1\left(
\mathcal{U}_\mathcal{X}, \mathbb{Z} \right)$ is generated as an abelian group
by elements supported on sets with fewer than $N$ vertices. \end{thm}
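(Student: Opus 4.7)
The plan is to parallel the proof of Theorem~\ref{thm:generators0}: given a 1-cochain $\sigma \in \cC^1(\mathcal{U}_\mathcal{X}, \mathbb{Z})$ supported on a subtree $\mathcal{X}$ with more than $N$ vertices, show that $\sigma$ can be written as a finite $\mathbb{Z}$-linear combination of cochains supported on strictly smaller subtrees. Recall that $\sigma$ is an assignment of integers to edges of $\Wh{\mathcal{X}}$ subject to the constraint that parallel edges receive equal labels. The principal new difficulty is that the labels are arbitrary integers rather than elements of a finite set, so a direct pigeonhole on local configurations at vertices along a long arc (as in Theorem~\ref{thm:generators0}) will not terminate. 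To get around this, I first decompose $\sigma = \sum_n n\, \sigma_n$, where $\sigma_n$ is the $\{0,1\}$-valued cochain assigning $1$ to edges labelled exactly $n$ by $\sigma$ and $0$ elsewhere. Since $\sigma$ satisfies the parallel-edge constraint, so does each $\sigma_n$, and only finitely many $n$ contribute. It therefore suffices to decompose each $\sigma_n$ as a sum of $\{0,1\}$-valued cochains on strictly smaller subtrees.

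For each $\sigma_n$, I repeat the arc-cutting argument of Theorem~\ref{thm:generators0}, but pigeonholing over extended local data. If $\mathcal{X}$ contains an embedded arc $v_1, \dots, v_M$ of sufficient length, then at each interior vertex $v_i$ I record the pair $(s_i, t_i) \in \mathcal{B}^\pm \times \mathcal{B}^\pm$ of adjacent edge labels of the arc, together with the restriction of $\sigma_n$ to the edges of $\Wh{\mathcal{X}}$ passing through $v_i$. The number of edges of $\Wh{\mathcal{X}}$ incident to any single vertex is bounded by $k$, the number of edges of $\Wh{e}$; so the number of possible $\{0,1\}$-local restrictions is at most $2^k$, and the total number of local configurations at $v_i$ is at most $(2n)^2 \cdot 2^k$. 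Taking $M$ strictly larger than this, pigeonhole yields vertices $v = v_{i_a}$ and $w = v_{i_b}$ on the arc whose local data agree after translation by $g = w v^{-1}$.

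Having matched $v$ and $w$, I cut and splice as in Theorem~\ref{thm:generators0}: set $\mathcal{Y} = A' \cup v w^{-1} B$ (in the notation of that proof) and transfer the labelling of $\sigma_n$ to a cochain $\tau_n$ on $\mathcal{Y}$. The agreement of the local data at $v$ and $w$ ensures that the parallel-edge constraint is preserved across the splice, so $\tau_n$ is well defined; then $\sigma_n - \tau_n$ is supported on $(\mathcal{X} - A) \cup g^{-1} B$, and both $\tau_n$ and $\sigma_n - \tau_n$ have strictly fewer support vertices than $\sigma_n$. Induction gives the desired decomposition of each $\sigma_n$, and reassembling via $\sigma = \sum_n n\, \sigma_n$ yields a decomposition of $\sigma$ into integer multiples of cochains with fewer than $N$ support vertices, with $N$ computable in $n$ and $\mathcal{L}$ from the pigeonhole bound above. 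The main obstacle I anticipate is verifying that the splicing construction genuinely produces a valid 1-cochain---in particular, that the parallel-edge constraint is preserved globally after cutting and reassembling---but this follows from the matching of local labellings at $v$ and $w$, in exact analogy with the role played by matching partitions in Theorem~\ref{thm:generators0}.
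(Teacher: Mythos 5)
Your route is genuinely different from the paper's: the paper does not cut a long arc at two matching vertices at all, but instead peels off one leaf $v$ of $\mathcal{X}$ at a time, subtracting a correction cochain supported on a bounded subtree $\mathcal{X}_{\left(a,P\right)}$ determined by the partition $P$ of the edges of $\Wh{e}$ into equal-label classes at $v$. Your level-set decomposition $\sigma=\sum_n n\,\sigma_n$ is a legitimate way to reduce to finitely many local configurations (the paper instead pigeonholes over partitions of the $k$ edges, which also works for arbitrary integer labels), and the consistency of the spliced edge-labelling at the cut vertex does follow from the matched local data, as you say.

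However, there is a genuine gap at exactly the point you flag and then wave away. For $\tau_n$ to lie in $\cC{}^1\left(\mathcal{U}_\mathcal{Y},\mathbb{Z}\right)$ it is not enough that each line through $\mathcal{Y}$ receive a well-defined label; you also need the global constraint that any two lines entering and leaving $\mathcal{Y}$ at the same pair of vertices adjacent to $\mathcal{Y}$ receive the same label. This does not follow from matching local data at $v$ and $w$: since $\mathcal{Y}=A'\union vw^{-1}B$ is smaller than $\mathcal{X}$ (it omits, for instance, everything hanging off the arc strictly between $v$ and $w$), there are vertices adjacent to $\mathcal{Y}$ that lie in the interior of $\mathcal{X}$, and two lines that exit $\mathcal{X}$ at different vertices---and hence may legitimately carry different $\sigma_n$-labels---can become parallel edges of $\Wh{\mathcal{Y}}$. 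The same problem infects the claim that $\sigma_n-\tau_n$ is supported on $\left(\mathcal{X}-A\right)\union g^{-1}B$. Repairing it requires enlarging the support of the correction term far enough in each offending direction that lines carrying distinct labels separate; that separation length is precisely what the paper's minimal subtrees $\mathcal{X}_{\left(a,P\right)}$ encode, it is what actually determines the computable bound $N$, and it is entirely absent from your pigeonhole count $\left(2n\right)^2 2^k$. As written, your argument does not go through without importing this key construction from the paper's proof.
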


\begin{proof} A subset $\mathcal{X} \subset \mathcal{C}$ gives a partition
$P_\mathcal{X}$ on the edges of $\Wh{v}$ for each vertex $v$ of
$\mathcal{X}$: in this partition, two edges are related if those edges extend
to edges between the same pair of vertices in $\Wh{\mathcal{X}}$. For each
element $a \in \mathcal{B}^{\pm}$ and partition $P$ on the edges of
$\Wh{e}$ there exists a subset $\mathcal{X} \subset \hull\left(e \union
S^e\left(a\right)\right) \intersection \mathcal{T}$ such that $P$ is at
least as fine as $P_\mathcal{X}$. Let $\mathcal{X}_{\left(a, P\right)}$ be
a minimal such subset; it is easy to see that it is contained in any other
subset with this property. 

Let $N$ be the maximum number of vertices in any $\mathcal{X}_{\left(a,
P\right)}$. We now prove that $\varinjlim \cC{}^1\left(
\mathcal{U}_\mathcal{X}, \mathbb{Z}\right)$ is generated as an abelian group
by elements supported on sets with at most $N$ vertices.

Let $\left[\sigma\right]$ be a 1-cochain supported on $\mathcal{X}$ and let
$\sigma \in \cC^1\left(\mathcal{U}_\mathcal{X}, \mathbb{Z}\right) $ represent
$\left[\sigma\right]$. Let $v$ be a leaf of $\mathcal{X}$. Then
$\sigma$ defines a partition $P$ on the edges of $\Wh{v}$ by relating two
edges if they are assigned the same integer by $\sigma$.  $P_\mathcal{X}$
is at least as fine as $P$. By translating $P$ by $v^{-1}$ (considering
the vertex as an element of the group $F$) we obtain a partition on
$\Wh{e}$, which we shall also denote by $P$.  Let $a \in \mathcal{B}$ be
the label on the edge connecting $v$ to the rest of $\mathcal{X}$.

By the definition of $N$, $\mathcal{X}_{\left(a, P\right)}$ has at most
$N$ vertices. Let $\tau$ be a 1-cochain supported on this set that assigns
to each edge of $\Wh{e}$ the same integer that $\sigma v$ does; note that
$\tau$ satisfies the requirement that if two edges connect the same pair of
vertices in the Whitehead graph then they are assigned the same integer. Since
$v\mathcal{X}_{\left(a, P\right)} \subset \mathcal{X}$, $\tau v^{-1}$ is
supported on $\mathcal{X}$ and then it is easy to see that $\sigma - \tau
v^{-1}$ is supported on $\mathcal{X} - v$. Proceeding by induction on the
number of vertices in the support of $\sigma$ we obtain the required
result.\end{proof}

This immediately implies the following corollary:

\begin{cor} $\varinjlim \cC{}^1\left(\mathcal{U}_\mathcal{X}, \mathbb{Z}
\right)$ is generated as an $F$-module by those of its elements that are
supported on a ball centred at $e$ of computable finite diameter.\qed
\end{cor}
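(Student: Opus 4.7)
The plan is to deduce the corollary directly from Theorem~\ref{thm:generators1} by using the translation action of $F$ to move the bounded abelian generating set into a ball based at $e$. The key observation, which was already used implicitly in the proof of Theorem~\ref{thm:relators0}, is that the $F$-action on $\varinjlim\cC{}^1(\mathcal{U}_\mathcal{X}, \mathbb{Z})$ respects supports in the sense that if $x$ is supported on a subtree $\mathcal{X} \subset \mathcal{T}$ then $xg$ is supported on $g^{-1}\mathcal{X}$. I would verify this first (it is immediate from the description of the action on Whitehead graphs given at the start of Section~\ref{sec:1st}, since refinement commutes with the $F$-action).

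Next, I would apply Theorem~\ref{thm:generators1} to obtain a computable $N$ such that the abelian group $\varinjlim\cC{}^1(\mathcal{U}_\mathcal{X}, \mathbb{Z})$ is generated by elements whose supports have at most $N$ vertices. Any subtree of $\mathcal{T}$ with at most $N$ vertices has diameter at most $N-1$, so if $x$ is such a generator and $v$ is any vertex of its support $\mathcal{X}$, then $\mathcal{X} \subset B(v, N-1)$, the ball of radius $N-1$ in $\mathcal{T}$ centred at $v$. Multiplying by $v \in F$ on the right then moves the support into $v^{-1}\mathcal{X} \subset B(e, N-1)$, so $xv$ is supported on $B(e, N-1)$, and $x = (xv)v^{-1}$ lies in the $F$-orbit of an element supported on this ball.

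Collecting the $F$-translates of a generating set of the abelian subgroup of elements supported on $B(e, N-1)$ therefore recovers the full abelian generating set given by Theorem~\ref{thm:generators1}, and hence generates $\varinjlim\cC{}^1(\mathcal{U}_\mathcal{X}, \mathbb{Z})$ as an $F$-module. The ball $B(e, N-1)$ has diameter $2(N-1)$, which is a computable function of $\mathcal{L}$ and $n$ because $N$ is. There is no real obstacle here: the entire content is the translation principle for supports together with the trivial observation that a subtree with $N$ vertices fits in a ball of radius $N-1$ around any of its vertices; so the proof is essentially a one-line consequence of the preceding theorem.
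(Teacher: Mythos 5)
Your argument is correct and is exactly the one the paper has in mind: the paper marks this corollary as an immediate consequence of Theorem~\ref{thm:generators1} (with no written proof), and the translation principle for supports plus the observation that an $N$-vertex subtree sits in a ball of computable radius is precisely the intended content. Spelling it out as you do is fine and matches the paper's approach.
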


To proceed to compute a set of relators for $\varinjlim \cC{}^1\left(
\mathcal{U}_\mathcal{X}, \mathbb{Z}\right)$ we require the following lemma,
which is analagous to Lemma~\ref{lem:intersections}.

\begin{lem}\label{lem:intersections2} Let $D$ be the maximum of the diameters
of the supports of the generators computed in Theorem~\ref{thm:generators1}.
Suppose that $\left[\sigma\right] \in \varinjlim \cC{}^1\left(
\mathcal{U}_\mathcal{X}, \mathbb{Z}\right)$ is supported on $\mathcal{X}_1$
and on $\mathcal{X}_2$ where $\mathcal{X}_1$ and $\mathcal{X}_2$ are
subtrees of $\mathcal{C}$ with non-trivial intersection. Then
$\left[\sigma\right]$ it is also supported on a $D$-neighbourhood of
$\mathcal{X}_1 \intersection \mathcal{X}_2$.\end{lem}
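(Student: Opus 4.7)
The plan is to adapt the argument of Lemma~\ref{lem:intersections} to one-dimensional cochains; the $D$-neighbourhood appears because the peeling step from the proof of Theorem~\ref{thm:generators1} replaces a leaf by a generator of diameter at most $D$ rather than by a single vertex.

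First, I would choose representatives $\sigma_1 \in \cC{}^1(\mathcal{U}_{\mathcal{X}_1}, \mathbb{Z})$ and $\sigma_2 \in \cC{}^1(\mathcal{U}_{\mathcal{X}_2}, \mathbb{Z})$ of $[\sigma]$. Since $\mathcal{X}_1 \cap \mathcal{X}_2 \neq \emptyset$, the union $\mathcal{Y} = \mathcal{X}_1 \cup \mathcal{X}_2$ is a subtree, and after further refinement if necessary $\sigma_1$ and $\sigma_2$ become equal as elements of $\cC{}^1(\mathcal{U}_\mathcal{Y}, \mathbb{Z})$. The geometric observation driving the argument is that if $v \in \mathcal{X}_1$ satisfies $d(v, \mathcal{X}_2) \le D$, then $d(v, \mathcal{X}_1 \cap \mathcal{X}_2) \le D$ as well: in the subtree $\mathcal{Y}$ any geodesic from $v$ to its nearest point of $\mathcal{X}_2$ must cross $\mathcal{X}_1 \cap \mathcal{X}_2$.

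Next I would apply the peeling procedure from the proof of Theorem~\ref{thm:generators1} iteratively to $\sigma_1$: at each step pick a leaf $v$ of the current subtree that lies outside $N_D(\mathcal{X}_1 \cap \mathcal{X}_2)$ and write the current cochain as $\sigma' + \tau v^{-1}$, where $\sigma'$ is the reduced cochain supported on the current subtree minus $v$ and $\tau v^{-1}$ is supported on a subtree $v\mathcal{X}_{(a, P)}$ of diameter at most $D$ growing from $v$ into the current subtree. By the geometric observation every peeled $v$ satisfies $d(v, \mathcal{X}_2) > D$, and since $\mathcal{X}_1 \cap N_D(\mathcal{X}_1 \cap \mathcal{X}_2)$ is a subtree (both $\mathcal{X}_1$ and $N_D(\mathcal{X}_1 \cap \mathcal{X}_2)$ are convex in $\mathcal{T}$), when the procedure terminates the residual cochain $\sigma_1^*$ is supported on a subtree of $N_D(\mathcal{X}_1 \cap \mathcal{X}_2)$.

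It then remains to show that the accumulated peeled contribution $\sum_k \tau_k v_k^{-1}$ vanishes in $\varinjlim \cC{}^1(\mathcal{U}_\mathcal{X}, \mathbb{Z})$, so that $[\sigma] = [\sigma_1^*]$. This step uses the agreement of $\sigma_1$ and $\sigma_2$ on $\mathcal{Y}$: the common refined cochain on $\Wh{\mathcal{Y}}$ can be non-zero only on edges coming from lines that cross both $\mathcal{X}_1$ and $\mathcal{X}_2$ (hence, since $\mathcal{Y}$ is a tree, also $\mathcal{X}_1 \cap \mathcal{X}_2$). When $v_k$ lies at distance greater than $D$ from $\mathcal{X}_2$, this constraint forces $\sigma_1$ to vanish on every edge of $\Wh{\mathcal{X}_1}$ joining two ``outward'' neighbours of $v_k$, since the corresponding lines cannot reach $\mathcal{X}_2$; symmetrising by a parallel peeling of $\sigma_2$ and comparing the two decompositions yields the cancellation of the remaining ``outward--inward'' contributions. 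I expect this final cancellation to be the main technical obstacle, since it requires a careful local analysis of which lines can contribute to $\tau_k v_k^{-1}$ and how the peelings on the two sides match up across $\mathcal{X}_1 \cap \mathcal{X}_2$.
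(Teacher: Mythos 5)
There is a genuine gap, and it is exactly the step you flag as the main obstacle. The paper's proof never decomposes $[\sigma]$ at all. Since the refinement maps on $\cC{}^1$ are extension by zero and hence injective, $[\sigma]$ has a single well-defined representative on $\Wh{\mathcal{X}_1 \union \mathcal{X}_2}$; being supported on $\mathcal{X}_1$ forces that representative to vanish on every edge whose line does not cross $\mathcal{X}_1$, and likewise for $\mathcal{X}_2$, so every nonzero edge comes from a line crossing both subtrees and therefore (by convexity of subtrees of a tree) crossing $\mathcal{X}_1 \intersection \mathcal{X}_2$. The only remaining point is that such a labelling is already the extension by zero of a consistent labelling over the $D$-neighbourhood, which is where the definition of $D$ via the sets $\mathcal{X}_{(a,P)}$ enters. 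Your first and third paragraphs contain this key observation, but you then route it through the peeling machinery of Theorem~\ref{thm:generators1}, which cannot deliver the conclusion.

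Concretely: because the structure maps of $\varinjlim \cC{}^1\left(\mathcal{U}_{\mathcal{X}},\mathbb{Z}\right)$ are injective, an element ``vanishes in the limit'' only if it is the zero cochain, and your telescoping identity gives $\sum_k \tau_k v_k^{-1} = \sigma_1 - \sigma_1^*$ on the nose; so demanding that the accumulated peeled contribution vanish is demanding that every peeling step was trivial. It is not: if a line $l$ with nonzero label runs from $\mathcal{X}_1 \intersection \mathcal{X}_2$ out through a leaf $v_1$ of $\mathcal{X}_1$ at distance greater than $D$, then $\tau_1$ records that label on the corresponding ``inward--outward'' edge of $\Wh{v_1}$, the subtraction $\sigma_1 - \tau_1 v_1^{-1}$ kills $l$'s label on $\Wh{\mathcal{X}_1}$, and the invariant ``all nonzero edges pass through $\mathcal{X}_1 \intersection \mathcal{X}_2$'' is destroyed for the rest of the induction. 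The residual $\sum_k \tau_k v_k^{-1}$ still carries $l$'s label and is supported far from $\mathcal{X}_1 \intersection \mathcal{X}_2$. The proposed symmetrisation against a parallel peeling of $\sigma_2$ offers no mechanism for cancelling this: those terms are supported near the leaves of $\mathcal{X}_2$, in parts of the tree disjoint from the sets $v_k\mathcal{X}_{(a_k,P_k)}$. The fix is to abandon the decomposition entirely and argue directly about the support of the fixed representative, as above.
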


\begin{proof} $\left[\sigma\right]$ is represented by a labelling of the
edges of $\Wh{\mathcal{X}_1 \union \mathcal{X}_2}$ by integers such that each
edge that does not pass through $\mathcal{X}_1 \intersection \mathcal{X}_2$
is labelled by 0. Any such 1-cycle is supported on a $D$-neighbourhood of
this subset.\end{proof}

\begin{thm}\label{thm:relators1} There is an algorithm that computes a set of
relators for the $F$-module $\varinjlim \cC{}^1\left(
\mathcal{U}_\mathcal{X}, \mathbb{Z}\right)$ with respect to the basis computed
by the algorithm of Theorem~\ref{thm:generators1}.\end{thm}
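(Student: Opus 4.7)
The plan is to mirror the proof of Theorem~\ref{thm:relators0}, substituting Lemma~\ref{lem:intersections2} for Lemma~\ref{lem:intersections} and using the generators from Theorem~\ref{thm:generators1}. Let $p \colon \mathbb{Z}F^k \to \varinjlim \cC{}^1\left(\mathcal{U}_\mathcal{X}, \mathbb{Z}\right)$ be the surjection sending $e_i$ to $\tau_i$, where $\tau_1, \ldots, \tau_k$ are the generators produced by Theorem~\ref{thm:generators1}. Once I show that $\ker p$ is generated as an $F$-module by elements supported on a computable ball about $e$, the algorithm is an exact replica of the corollary to Theorem~\ref{thm:relators0}: enumerate the $\mathbb{Z}$-linear combinations of translates of the $e_i$ with support in that ball and solve for those lying in $\ker p$ by Smith normal form.

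The first preparatory step is to bound the size of a preimage under $p$ of a 1-cochain with small support. Inspecting the inductive step in the proof of Theorem~\ref{thm:generators1}, each leaf of the support is peeled off using a single translated generator, so a straightforward induction shows that a 1-cochain supported on a subtree with $L$ vertices admits a preimage whose support has size bounded by a computable function of $L$ (and of the fixed constants from Theorem~\ref{thm:generators1}). Let $D$ denote the maximum diameter of the supports of $\tau_1, \ldots, \tau_k$.

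The main step then follows the template of Theorem~\ref{thm:relators0}. Given a relator $x \in \ker p$ whose support $\mathcal{X}$ has many vertices, I would choose a cut vertex $v$ whose removal splits $\mathcal{X}$ into subtrees $A$ and $B$, each with more than a threshold number of vertices (a threshold determined by $D$ and the preimage bound above). Let $C$ be the ball of radius $D$ in $\mathcal{X}$ around $v$, and let $y$ be the sum of those summands of $x$ whose supports lie in $A$. Then $y$ is supported on $A$, the remainder $x-y$ is supported on $B \cup C$, and $py = -p(x-y)$ together with Lemma~\ref{lem:intersections2} forces $py$ to be supported on a $D$-neighbourhood of $A \cap (B \cup C) = A \cap C$, a set of computably bounded size. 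Applying the preimage bound to $py$ yields $w \in \mathbb{Z}F^k$ with $pw = py$ and correspondingly bounded support, so that $x = (y-w) + (x-y+w)$ exhibits $x$ as a sum of two elements of $\ker p$ with strictly smaller supports, closing the induction.

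The main obstacle I anticipate is the interaction between the $D$-thickening introduced by Lemma~\ref{lem:intersections2} and the bookkeeping on support sizes: unlike Lemma~\ref{lem:intersections}, whose conclusion returns the intersection exactly, the 1-cochain analogue enlarges it, so the various constants must be chosen carefully so that $A$ and $B$ still strictly shrink after accounting for the correction term $w$ and for $C$. Once the threshold $N$, the preimage bound, and the correction term are fixed compatibly, the induction on the number of vertices in the support of a relator closes, and the algorithm for producing relators proceeds precisely as in the 0-dimensional case.
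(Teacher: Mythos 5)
Your proposal is correct and follows exactly the paper's route: the paper's proof of this theorem simply says that the argument of Theorem~\ref{thm:relators0} goes through with Lemma~\ref{lem:intersections2} in place of Lemma~\ref{lem:intersections}, which is precisely the substitution you carry out, and your attention to the $D$-thickening in the conclusion of Lemma~\ref{lem:intersections2} is the one point the paper glosses over with ``this makes no difference to the proof.'' No gap; your write-up is if anything more careful than the original.
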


\begin{proof} The proof of Theorem~\ref{thm:relators0} works here too;
Lemma~\ref{lem:intersections2} has a weaker hypothesis than
Lemma~\ref{lem:intersections} does but this makes no difference to the proof.
\end{proof}

Theorems~\ref{thm:generators1} and~\ref{thm:relators1} together give an
algorithm that computes a finite presentation for $\varinjlim\cC{}^1
\left(\mathcal{U}_\mathcal{X}, \mathbb{Z}\right)$.  $\cH{}^1\left(
\mathcal{D}, \mathbb{Z}\right)$ is the quotient of this abelian group by the
image under the boundary map of $\varinjlim\cC{}^0\left(
\mathcal{U}_\mathcal{X}, \mathbb{Z}\right)$, so it remains to show that this
image has a computable generating set.

\begin{lem}\label{lem:image} $d\varinjlim\cC{}^0\left(\mathcal{U}_\mathcal{X},
\mathbb{Z} \right)$ is generated as an $F$-module by those of its elements
that are supported on $\Wh{e}$.\end{lem}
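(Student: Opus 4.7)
My plan is to prove the stronger fact that $\varinjlim\cC{}^0\left(\mathcal{U}_\mathcal{X}, \mathbb{Z}\right)$ is itself generated as a right $F$-module by those of its elements that are supported on $\Wh{e}$; since $d$ is $F$-equivariant, the stated lemma then follows by applying $d$.

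Given a $0$-cochain $\sigma \in \cC{}^0\left(\mathcal{U}_\mathcal{X}, \mathbb{Z}\right)$, I would decompose it vertex-by-vertex as $\sigma = \sum_u \chi_u$, summed over the vertices $u$ of $\Wh{\mathcal{X}}$, where $\chi_u$ takes value $\sigma(u)$ at $u$ and $0$ elsewhere. Because $\mathcal{T}$ is a tree and $\mathcal{X}$ a subtree, each such $u$ has a unique neighbour $v(u) \in \mathcal{X}$.

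The crux is the claim that $\chi_u$ coincides, in the direct limit, with a $0$-cochain $\tilde\chi_u \in \cC{}^0\bigl(\mathcal{U}_{\{v(u)\}}\bigr)$ supported on the single-vertex subtree $\{v(u)\}$: namely the cochain that assigns $\sigma(u)$ to the neighbour $u$ of $v(u)$ and $0$ to the other $2n-1$ neighbours. To verify this I would trace through the refinement pullback $\cC{}^0\bigl(\mathcal{U}_{\{v(u)\}}\bigr) \to \cC{}^0\left(\mathcal{U}_\mathcal{X}\right)$: it sends each Whitehead vertex $u'$ of $\mathcal{X}$ to the neighbour of $v(u)$ that is the first step of the geodesic $[v(u), u']$ in $\mathcal{T}$. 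Since $u \notin \mathcal{X}$ and every vertex of this geodesic except possibly its terminal one lies in $\mathcal{X}$, this first step equals $u$ precisely when $u' = u$, so the pullback of $\tilde\chi_u$ is exactly $\chi_u$.

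Finally, the right action of $v(u)^{-1} \in F$ carries cochains on $\Wh{e}$ to cochains on $\Wh{\{v(u)\}}$: taking $c_u \in \cC{}^0(\Wh{e})$ to be the indicator cochain assigning $\sigma(u)$ to the neighbour $v(u)^{-1}u$ of $e$ gives $\tilde\chi_u = c_u \cdot v(u)^{-1}$. Thus each $\chi_u$, and hence $\sigma$ itself, lies in the $F$-submodule of $\varinjlim\cC{}^0$ generated by cochains on $\Wh{e}$; applying $d$ then yields the lemma. The main subtlety is the combinatorial verification of the refinement-pullback claim, which is essentially a small observation about where geodesics exit subtrees of a tree; once that is pinned down, the vertex decomposition and the $F$-equivariance of $d$ close the argument.
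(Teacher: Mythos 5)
Your proposal is correct and follows essentially the same route as the paper: the paper's (one-line) proof likewise establishes the stronger fact that $\cC{}^0\left(\mathcal{U}_\mathcal{X}, \mathbb{Z}\right)$ is generated by elements supported on single-vertex subtrees $\left\{v\right\}$, $v \in \mathcal{X}$, and then applies the $F$-equivariant map $d$. You have simply made explicit the vertex-by-vertex decomposition, the refinement-pullback computation, and the translation to $\Wh{e}$ that the paper leaves implicit.
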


\begin{proof} $\cC{}^0\left(\mathcal{U}_\mathcal{X}, \mathbb{Z} \right)$ is
generated as an abelian group by those elements that are supported on
$\Wh{v}$ for some $v \in \mathcal{X}$, and if $\left[\sigma\right]$ is
supported on $\mathcal{X}$ then so is $d\left[\sigma\right]$.\end{proof}

Putting the results of this section together we conclude:

\begin{thm} There is an algorithm that determines a presentation for the
$F$-module $\cH{}^{1}\left(\mathcal{D}, \mathbb{Z}\right)$.\qed\end{thm}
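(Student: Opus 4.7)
The plan is to combine the three preceding results into a standard presentation-of-a-quotient computation. Theorems~\ref{thm:generators1} and~\ref{thm:relators1} together give, algorithmically, a finite generating set $\left\{f_1, \ldots, f_k\right\}$ for the $F$-module $M := \varinjlim \cC{}^1\left(\mathcal{U}_\mathcal{X}, \mathbb{Z}\right)$, together with a finite list of relators; equivalently they give an algorithm producing an explicit finite presentation
\begin{equation}
\mathbb{Z}F^m \xrightarrow{R} \mathbb{Z}F^k \xrightarrow{p} M \to 0
\end{equation}
of right $F$-modules, where the columns of $R$ are the computed relators. Since $\cH{}^1\left(\mathcal{D}, \mathbb{Z}\right) = M / dN$ where $N := \varinjlim \cC{}^0\left(\mathcal{U}_\mathcal{X}, \mathbb{Z}\right)$, to finish I need only adjoin to the relators a set of $F$-module generators of $dN$, pulled back to $\mathbb{Z}F^k$.

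For the first of these two steps I would use Lemma~\ref{lem:image}: a generating set for $dN$ as an $F$-module consists of the images under $d$ of a $\mathbb{Z}$-basis of the finite-rank abelian group $\cC{}^0\left(\mathcal{U}_{\left\{e\right\}}, \mathbb{Z}\right)$, which is computable since the vertex set of $\Wh{\left\{e\right\}}$ can be enumerated from the data $\left(\mathcal{B}, \mathcal{L}\right)$. Each such generator $s_j \in M$ is represented by an explicit integer labelling on the edges of $\Wh{\left\{e\right\}}$, and hence is supported on a subtree of $\mathcal{T}$ of a priori bounded size.

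For the second (lifting) step, I would use that each generator $f_i$ of $M$ has support contained in a ball of some computable radius around $e$. Given $s_j$, supported on $\Wh{\left\{e\right\}}$, both $s_j$ and the $F$-translates $f_i g$ whose supports meet $\Wh{\left\{e\right\}}$ lie in the finitely generated abelian subgroup $M_R \subset M$ consisting of elements supported on the ball of some computable radius $R$ around $e$. A basis for $M_R$ and expressions for $s_j$ and the relevant $f_i g$ in terms of this basis are computable, so solving a finite $\mathbb{Z}$-linear system (e.g.\ via Smith normal form) produces an explicit preimage $\tilde{s}_j \in \mathbb{Z}F^k$ with $p\tilde{s}_j = s_j$.

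Finally, the claimed presentation is
\begin{equation}
\mathbb{Z}F^{m+l} \xrightarrow{\left[R \mid \tilde{s}_1 \cdots \tilde{s}_l\right]} \mathbb{Z}F^k \to \cH{}^1\left(\mathcal{D}, \mathbb{Z}\right) \to 0,
\end{equation}
which is exact because $\ker\left(\mathbb{Z}F^k \to M/dN\right) = p^{-1}\left(dN\right) = \ker p + \left\langle \tilde{s}_1, \ldots, \tilde{s}_l\right\rangle_F$. The main potential obstacle is the lifting step: I need effective control over which $F$-translates of the $f_i$ can contribute to an expression for something supported on $\Wh{\left\{e\right\}}$, but the explicit support bound coming from Theorem~\ref{thm:generators1} makes this only a finite linear-algebra problem, so no genuinely new ideas are needed beyond the machinery already developed.
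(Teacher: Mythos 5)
Your proposal is correct and matches the paper's intent exactly: the theorem is stated with a \qed precisely because it is the routine assembly of Theorems~\ref{thm:generators1} and~\ref{thm:relators1} (a finite presentation of $\varinjlim \cC{}^1$) with Lemma~\ref{lem:image} (a computable generating set for $d\varinjlim\cC{}^0$) via the standard presentation-of-a-quotient construction. Your explicit treatment of the lifting step, using the support bounds to reduce it to a finite $\mathbb{Z}$-linear system, correctly fills in the one detail the paper leaves implicit.
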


\begin{cor} There is an algorithm that determines whether or not
$\cH{}^{1}\left(\mathcal{D}, \mathbb{Z}\right)$ is trivial.\end{cor}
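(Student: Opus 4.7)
The plan is to reduce the triviality question for $\cH{}^{1}\left(\mathcal{D}, \mathbb{Z}\right)$ to a decidable submodule-membership problem in a finitely generated free $\mathbb{Z}F$-module, using the presentation produced by the preceding theorem.

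Concretely, I would first apply the preceding theorem to obtain a finite presentation
\[
\mathbb{Z}F^{m} \xrightarrow{A} \mathbb{Z}F^{k} \to \cH{}^{1}\left(\mathcal{D}, \mathbb{Z}\right) \to 0
\]
of $\cH{}^{1}\left(\mathcal{D}, \mathbb{Z}\right)$ as a right $\mathbb{Z}F$-module, where $A$ is a matrix with entries in $\mathbb{Z}F$ that can be written down algorithmically. Then $\cH{}^{1}\left(\mathcal{D}, \mathbb{Z}\right)$ is trivial if and only if the columns of $A$ generate $\mathbb{Z}F^{k}$ as a right $\mathbb{Z}F$-module, equivalently if and only if each of the finitely many standard basis vectors $e_{i} \in \mathbb{Z}F^{k}$ lies in the right $\mathbb{Z}F$-submodule $S \subset \mathbb{Z}F^{k}$ spanned by the columns of $A$.

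The corollary therefore reduces to the submodule-membership problem: given a finitely generated right $\mathbb{Z}F$-submodule $S \subset \mathbb{Z}F^{k}$ and an element $v \in \mathbb{Z}F^{k}$, decide whether $v \in S$. I would settle this by invoking the fact that, because $F$ is free, the group ring $\mathbb{Z}F$ is a free ideal ring in the sense of Cohn and admits a weak algorithm; this provides an effective reduction procedure that brings any element of $\mathbb{Z}F^{k}$ to a unique normal form modulo a finitely generated submodule, and in particular decides membership in $S$. Applying this procedure to each $e_{i}$ then decides triviality of $\cH{}^{1}\left(\mathcal{D}, \mathbb{Z}\right)$.

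The main obstacle is this final step. The ``yes'' direction of membership in $S$ is trivially semi-decidable by enumerating finite $\mathbb{Z}F$-linear combinations of a generating set until one hits $v$; the substantive content is that the ``no'' direction is also semi-decidable, which uses the structural fact that $\mathbb{Z}F$ admits effective normal forms modulo finitely generated submodules of $\mathbb{Z}F^{k}$. Once this piece of computational ring theory is in hand, the rest of the argument is routine bookkeeping on the output of the preceding theorem.
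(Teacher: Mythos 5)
There is a genuine gap at the final, load-bearing step. Your reduction of triviality of $\cH{}^{1}\left(\mathcal{D}, \mathbb{Z}\right)$ to the question of whether each basis vector $e_i$ lies in the column module $S$ of the relation matrix is correct, but the decidability of that membership problem is exactly what needs proving, and the ring-theoretic facts you invoke to settle it are false for $\mathbb{Z}F$. Cohn's theorem that the group algebra of a free group is a free ideal ring, and the existence of a weak algorithm, hold for $kF$ with $k$ a \emph{field}; they fail over $\mathbb{Z}$. For instance, a ring with a weak algorithm relative to a filtration has a division ring in degree $0$, whereas here the degree-$0$ part is $\mathbb{Z}$; concretely, the right ideal $\left(2, x-1\right)$ of $\mathbb{Z}F$ is finitely generated but not free (the generators satisfy $2\cdot\left(x-1\right) - \left(x-1\right)\cdot 2 = 0$ yet the ideal is not principal), so $\mathbb{Z}F$ is not even a semifir. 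Without this, you have no normal-form procedure and no semi-decision procedure for non-membership, so the argument does not close. I am not aware of a citable general decidability result for submodule membership in $\mathbb{Z}F^{k}$ that you could substitute.

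The paper avoids this entirely by exploiting the direct-limit structure rather than the module presentation: $\cH{}^{1}\left(\mathcal{D}, \mathbb{Z}\right)$ has a computable finite generating set, each generator lives in some $\cH{}^{1}\left(\mathcal{U}_\mathcal{X}, \mathbb{Z}\right)$, and each $\cH{}^{1}\left(\mathcal{U}_\mathcal{X}, \mathbb{Z}\right)$ includes \emph{injectively} into the limit. Hence a generator is trivial in $\cH{}^{1}\left(\mathcal{D}, \mathbb{Z}\right)$ if and only if it already lies in the image of $d$ at that finite stage, which is a finite-dimensional $\mathbb{Z}$-linear solvability question, decidable by Smith normal form. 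If you want to salvage your approach, you would need either to prove decidability of the membership problem for finitely generated submodules of $\mathbb{Z}F^{k}$ (a substantial and, as far as I know, open-ended task) or to import the injectivity observation, at which point the module-theoretic machinery becomes unnecessary.
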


\begin{proof} Since $\cH{}^{1}\left(\mathcal{D}, \mathbb{Z}\right)$ has a
computable finite generating set, it is sufficient to be able to determine
whether or not each generator is trivial. But each
$\cH{}^{1}\left(\mathcal{U}_\mathcal{X}, \mathbb{Z}\right)$ includes
injectively into $\cH{}^{1}\left(\mathcal{D}, \mathbb{Z}\right)$, so it is
sufficient to be able to determine whether or not a given element of some
$\cH{}^1\left(\mathcal{U}_\mathcal{X}, \mathbb{Z}\right)$ is trivial; that
is, whether or not it is in the image of $d$. But the problem of determining
whether or not such an element has a preimage under $d$ is equivalent to
determining whether or not some finite dimensional $\mathbb{Z}$-linear
equation has a solution, so can be done algorithmically.  \end{proof}

\bibliography{decompositionspace}

\begin{thebibliography}{1}

\bibitem{CM}
Christopher~H Cashen and Nata\v{s}a Macura.
\newblock {Line patterns in free groups.}
\newblock {\em Geom. Topol.}, 15(3):1419--1475, 2011.

\bibitem{Manning2}
Jason~F Manning.
\newblock Virtually geometric words and whitehead's algorithm.
\newblock {\em Math. Res. Lett.}, 17(5):917--925, 2010.

\bibitem{Manning}
Jason~F Manning.
\newblock {The Bowditch boundary of $\left(G,\mathcal{H}\right)$ when $G$ is
  hyperbolic}, 2015.
\newblock arXiv:1504.03630.

\bibitem{Otal}
Jean-Pierre Otal.
\newblock Certaines relations d'equivalence sur l'ensemble des bouts d'un group
  libre.
\newblock {\em J. London Math. Soc.}, s2-46(1):123--139, 1992.

\end{thebibliography}

\end{document}